\documentclass{amsart}
\usepackage{latexsym}
\usepackage{amssymb}
\usepackage{amsmath}
\usepackage[colorlinks=true, linkcolor=blue, citecolor=blue]{hyperref}

\title{Basics of DTS quasigroups: algebra, geometry and enumeration}
\author{Ale\v s Dr\'apal}
\author{Terry S.~Griggs}
\author{Andrew R.~Kozlik}
\address{Dept.~of Algebra \\ Charles University \\ Sokolovsk\'a 83 \\ 186~75
Praha 8 \\ Czech Rep.}
\address{Dept.~of Mathematics and Statistics \\ The Open University \\
Walton Hall \\ Milton Keynes MK7 6AA \\ United Kingdom}
\address{Dept.~of Algebra \\ Charles University \\ Sokolovsk\'a 83 \\ 186~75
Praha 8 \\ Czech Rep.}
\thanks {Ale\v s Dr\'apal supported by VF20102015006.
Andrew Kozlik supported by SVV-2012-265317.}

\newtheorem{thm}{Theorem}[section]
\newtheorem{lem}[thm]{Lemma}
\newtheorem{prop}[thm]{Proposition}
\newtheorem{cor}[thm]{Corollary}
\theoremstyle{definition}
\newtheorem{ex}[thm]{Example}

\newcommand{\lref}[1]{Lemma~\ref{#1}}
\newcommand{\tref}[1]{Theorem~\ref{#1}}
\newcommand{\pref}[1]{Proposition~\ref{#1}}
\newcommand{\cref}[1]{Corollary~\ref{#1}}
\newcommand{\secref}[1]{Section~\ref{#1}}

\newcommand{\dtr}[1]{\langle\mathtt{#1}\rangle}
\newcommand{\str}[1]{\{\mathtt{#1}\}}

\def\bs{\backslash}
\def\op{^\mathrm{op}}

\keywords{Directed triples system, quasigroup}
\subjclass[2000] {Primary 05B07; Secondary 20N05}
\begin{document}

{\Large Electronic version of an article published as \textit{J. Algebra Appl.} \textbf{14} (2015), 1550089, DOI \href{https://doi.org/10.1142/S0219498815500899}{10.1142/S0219498815500899}, \copyright{} copyright World Scientific Publishing Company \url{https://www.worldscientific.com/worldscinet/jaa}.} \pagebreak

\begin{abstract}
A directed triple system can be defined as a decomposition of
a complete digraph to directed triples $\langle x,y,z\rangle$.
By setting $xy =z$, $yz =x$, $xz =y$ and $uu =u$ we get a binary
operation that can be a quasigroup. We give an algebraic
description of such quasigroups, explain how they can
be associated with triangulated pseudosurfaces and report
enumeration results.
\end{abstract}

\maketitle
The notion of a DTS quasigroup is defined in \secref{1}.
In \tref{16} we give an algebraic characterization that is
an important tool in classification and enumeration
of DTS quasigroups. In this respect our main result is
the classification of DTS quasigroups of order 13, where
we found $1\,206\,969$ isomorphism types. Some examples
which may be of particular interest are given in the Appendix.
The method of enumeration is reported in \secref{4}.

DTS loops are those loops that can be obtained from the
(idempotent) DTS quasigroups by prolongation.
\secref{2} explains why
there is little hope that any proper DTS loop will turn
out to be of an algebraic significance. In \secref{3} we show
that DTS quasigroups possess a rich geometrical structure.
This structure offers various invariants, some of which are
exploited in the classification result.

The first paper in which DTS quasigroups were defined is \cite{ldts}.
The connection to this paper is explained below in
\secref{1}.

\section{Directed triple systems and binary operations}\label{1}
Consider a complete directed graph on a set $X$. If $X$ is finite of size
$n$, then it contains
$n(n-1)$ directed edges (arrows). The set of edges can be
decomposed into $n(n-1)/3$ triples if and only if $n\ne 2\bmod 3$.
In such a decomposition each triple has a vertex set of
3, 4, 5 or 6 elements. We shall be considering only the first
alternative. There are four possibilities:
\begin{enumerate}
\item[(1)] $\{(x,y),(y,z),(z,x)\}$ that will be recorded
as $(x,y,z)$ and called a \emph{cyclic} triple,
\item[(2)] $\{(x,y),(y,z),(x,z)\}$ that will be recorded
as $\langle x,y,z\rangle$ and called a \emph{directed} triple,
\item[(3)] $\{(x,y),(y,x),(x,z)\}$ that will be denoted by $\mathbf{i}(x,y,z)$, and
\item[(4)] $\{(x,y),(y,x),(z,x)\}$ that will be denoted by $\mathbf{o}(x,y,z)$.
\end{enumerate}
Triples of types (3) and (4) will be used only in these introductory passages.
Given a decomposition $\mathcal D$ of the complete directed graph on $X$
to triples of types (1), (2), (3) and (4) define
upon $X$ an operation $\cdot$ by setting $a\cdot b = c$ whenever
$\{a,b,c\}$ is the vertex set of
the triple containing the directed edge $(a,b)$. To define
the binary operation $\cdot$ completely put $a\cdot a = a$
for every $a\in Q$ (the operation is idempotent).

Note that $(x,y,z)=(y,z,x)=(z,x,y)$, while $\langle x,y,z\rangle$,
$\langle y,z,x\rangle$ and $\langle z,x,y\rangle$ are pairwise different.
Call $(z,y,x)$ the \emph{opposite} (opposite triple) to $(x,y,z)$,
$\langle z,y,x\rangle$ the opposite to $\langle x,y,z\rangle$,
$\mathbf{i}(z,y,x)$ the opposite to $\mathbf{i}(x,y,z)$ and $\mathbf{o}(z,y,x)$ the opposite to $\mathbf{o}(x,y,z)$.

Call a triple from $\mathcal D$ a \emph{Steiner} triple if the opposite
triple is contained in $\mathcal D$ as well. The Steiner triples will
be denoted by $\{x,y,z\}$ as we shall pay no attention to the way how
$\{x,y,z\}$ is decomposed into the two opposite triples. The reason is that
we shall be interested in $X(\cdot)$ rather than $\mathcal D$. A Steiner
triple induces upon $\{x,y,z\}$ the structure of the (only) idempotent
quasigroup upon this set, and so the decomposition
to the opposites bears no impact upon the definition of $\cdot$.
It is easy to see that the rest of $\mathcal D$ (i.e.~the non-Steiner
triples) can be
derived from the knowledge of the binary operation uniquely.

From here on assume that $\mathcal D$ contains only cyclic and
directed triples.

\begin{lem}\label{11}
Suppose that $x,y \in X$ and $x\ne y$. Then $x\cdot y = y \cdot x$
if and only if $\{x,y,x\cdot y\}$ is a Steiner
triple of $\mathcal D$.
\end{lem}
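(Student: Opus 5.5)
The plan is to argue directly from the definition of $\cdot$, using that each directed edge lies in exactly one triple of $\mathcal D$ and that $\mathcal D$ now consists only of cyclic and directed triples. Put $z = x\cdot y$. Then $\{x,y,z\}$ is the vertex set of the triple $T\in\mathcal D$ containing $(x,y)$. Likewise $y\cdot x$ is the third vertex of the triple $T'$ containing $(y,x)$.

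For the direction ``$\Leftarrow$'', suppose $\{x,y,z\}$ is a Steiner triple. Then both $T$ and its opposite lie in $\mathcal D$, and both have vertex set $\{x,y,z\}$. I will check that reversing a cyclic or a directed triple reverses every one of its edges. Indeed $(z,y,x)$ has edges $(z,y),(y,x),(x,z)$, and $\langle z,y,x\rangle$ has edges $(z,y),(y,x),(z,x)$. Hence $(y,x)$ lies in the opposite of $T$, so $T'$ is that opposite and $y\cdot x = z = x\cdot y$.

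For ``$\Rightarrow$'', suppose $y\cdot x = x\cdot y = z$. Then $T$ and $T'$ both have vertex set $\{x,y,z\}$, and $T\neq T'$, because no cyclic or directed triple contains both $(x,y)$ and $(y,x)$. Two distinct members of a decomposition are edge-disjoint, each has three edges, and the complete digraph on $\{x,y,z\}$ has exactly six edges. So $T\cup T'$ is precisely this set of six edges, and $T'$ is the complement of $T$ within it. A short case check then shows that this complement is the opposite of $T$. If $T=(x,y,z)$, the complement is $\{(y,x),(z,y),(x,z)\}$, which is $(z,y,x)$. If $T$ is directed, say $\{(a,b),(b,c),(a,c)\}$, the complement is $\{(b,a),(c,b),(c,a)\}$, which is $\langle c,b,a\rangle$. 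Thus the opposite of $T$ belongs to $\mathcal D$, and $\{x,y,z\}$ is a Steiner triple.

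The only delicate point is this last identification: the complement of a cyclic or directed triple within the six edges must again be a triple of an allowed type, and in fact its opposite. This is exactly where the standing assumption excluding types (3) and (4) is used, and it is handled by the two-case check above.
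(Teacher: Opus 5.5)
Your proof is correct and follows the same route as the paper. The paper's one-line proof observes only that $y\cdot x=z$ exactly when the triple carrying $(y,x)$ has vertex set $\{x,y,z\}$. Your edge count on $\{x,y,z\}$, showing that the complement of a cyclic or directed triple there is its opposite, makes explicit the step the paper leaves implicit: such a second triple must be the opposite of the first, and conversely.
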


\begin{proof} Let $z$ be the third vertex of the triple that
contains $(x,y)$. Then $(y,x)$ determines a triple with the vertex set
$\{x,y,z\}$ if and only if $z = y\cdot x$.
\end{proof}

The binary operation will be sometimes replaced by juxtaposition,
with, say, $x\cdot yz$ meaning $x\cdot(y\cdot z)$. A cyclic triple
$(x,y,xy)$ fulfils both $y\cdot xy = x$ and $xy \cdot x = y$.
The latter laws are called \emph{semisymmetric}. If they hold
universally, then they yield a structure of a quasigroup in
which $y\bs x = xy = y/x$ (it is well known and easy to see that
each of the semisymmetric laws implies the other law).

The pair $(X,\mathcal D)$ is called a \emph{Mendelsohn triple system}
(MTS) if all elements of $\mathcal D$ are cyclic. It is clear that
$\mathcal D$ is MTS if and only if the operation $\cdot$ is
semisymmetric (and idempotent, by the definition). Idempotent
semisymmetric quasigroups are thus rightly known as \emph{Mendelsohn
quasigroups}. An MTS is called \emph{pure} if it contains no Steiner
triple.

Commutative semisymmetric quasigroups are called \emph{totally symmetric}
because all their para\-strophes (i.e.~the conjugates) coincide.
Idempotent totally
symmetric quasigroups are also known as  \emph{Steiner} quasigroups
and they are in a
1-to-1 correspondence to \emph{Steiner triple systems}
(STS). An MTS that is not an STS is called \emph{proper}.

In this paper we shall investigate the situation when all elements
of $\mathcal D$ are directed triples. Then $(X,\mathcal D)$ forms
a \emph{directed triple system} (DTS). It is called \emph{pure} if
it contains no Steiner triple. If all triples in $\mathcal D$ are
Steiner, then we get again a Steiner quasigroup, and that happens,
by \lref{11}, if and only if the operation $\cdot$ is commutative.
Call a DTS \emph{proper} if it is not an STS.

The purpose of this paper is to study those DTS for which $X(\cdot)$
is an (idempotent) quasigroup. Note the difference to MTS, where
the semisymmetric law
guarantees that we get a quasigroup structure in all cases.

From here on we shall assume that $\mathcal D$ is a DTS upon $X$.
Our first goal will be to investigate the conditions under which
$X(\cdot)$ is a quasigroup. Such systems will be called
\emph{Latin directed triple systems} (LDTS).
Here and elsewhere there will be a nontrivial intersection
with paper \cite{ldts} where we determined the
existence spectrum of (proper) LDTS. In this paper our approach
is somewhat
different. While \cite{ldts} respects the style
of exposition typical for design theory, here we
concentrate on algebraic and geometrical connections that
are complemented by a report on enumerations of LDTS of
orders up to 13 (the enumeration strategy depends heavily
upon the algebraic model).

\begin{lem}\label{12}
The binary system $X(\cdot)$ is a quasigroup if and only if
it is \emph{divisible} (i.e.~for all $x,y\in X$ there exist
$u,v \in X$ such that $xu = y$ and $vx = y$).
\end{lem}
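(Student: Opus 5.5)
Since the paper works with a finite set $X$ of size $n$, I will use a counting argument. The ``only if'' direction is immediate: in a quasigroup the equations $xu=y$ and $vx=y$ have (unique) solutions, so $X(\cdot)$ is divisible. For the converse, fix $x\in X$. I will consider the left translation $L_x\colon u\mapsto xu$ and the right translation $R_x\colon v\mapsto vx$ as maps $X\to X$.

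Divisibility says exactly that every $L_x$ and every $R_x$ is surjective. A surjective self-map of a finite set is a bijection, so each $L_x$ and $R_x$ is a permutation of $X$. Hence the solutions $u$ and $v$ of $xu=y$ and $vx=y$ are unique, and $X(\cdot)$ is a quasigroup. I expect no real obstacle here; the only point to state carefully is that we rely on the finiteness of $X$ fixed at the start of \secref{1}.

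It is worth noting why the statement is not a tautology. Cancellation can fail in a DTS. For example, $\langle x,u,y\rangle$ and $\langle y,x,u'\rangle$ can both lie in $\mathcal D$, and then $xu=y=xu'$ with $u\ne u'$. So divisibility is genuinely what forces the Latin property; without it the directed triples need not yield a quasigroup at all.

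If one wanted an argument that avoids finiteness, I would try the following. I would show that a failure of left cancellation at $x$, as in the configuration above, forces some element outside the image of a suitable translation. I would do this by analysing, through the two triples on $\{x,u,y\}$ and $\{x,u',y\}$, which of the edges $(x,y)$ and $(y,x)$ each of them contains. I expect this to be the hard step, and it is unnecessary in the finite setting of the paper, so I would present only the counting proof.
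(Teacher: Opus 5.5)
Your argument proves the lemma only for finite $X$, and the paper does not assume finiteness. \secref{1} opens with an arbitrary set $X$ and only says what happens \emph{if} $X$ is finite. The remark after \lref{110} states explicitly that infinite sets are not excluded here. \lref{111} is set apart precisely because it needs finiteness, whereas \tref{13}, whose proof rests on this lemma, is meant for arbitrary $X$.

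For infinite $X$ a surjective translation need not be injective, so your key step, ``a surjective self-map of a finite set is a bijection,'' is not available. Note also that your finite argument never uses $\mathcal D$: every finite divisible binary system is a quasigroup. The real content of the lemma is therefore exactly the case you set aside.

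The missing idea is a local count, and the step you expected to be hard is in fact short.
\begin{itemize}
\item The case $x=y$ is trivial, because $xu\ne x$ whenever $u\ne x$.
\item Fix $x\ne y$. A cyclic or directed triple contains exactly one arc between each pair of its vertices. So the only triples of $\mathcal D$ whose vertex set contains both $x$ and $y$ are the one carrying $(x,y)$ and the one carrying $(y,x)$.
\item Each of these yields three equations $a_1a_2=a_3$ with $a_1,a_2,a_3$ pairwise distinct. Hence there are at most six equations in which $a_i=x$ and $a_j=y$ for some $i\ne j$.
\item Now consider the products $xy$ and $yx$, together with solutions of $xu=y$, $vx=y$, $yu'=x$ and $v'y=x$. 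These solutions exist by divisibility, and they automatically avoid $x$ and $y$ because $xx=x$, $xy\ne y$ and $yx\ne y$. These six equations realize all six ordered position pairs $(i,j)$.
\item Since there are at most six equations and all six position pairs occur, each position pair is realized by exactly one equation. Thus $u$, $v$, $u'$ and $v'$ are unique.
\end{itemize}
This is the paper's proof, and it works for arbitrary $X$.
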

\begin{proof}
Assume $x \ne y$ and consider the triples that carry $(x,y)$ and
$(y,x)$. Each of the two triples induces three different
ordered triples $(a_1,a_2,a_3)\in X^3$
such that $a_3 = a_1a_2$. There are thus at most six such triples for
which there exist $i,j \in \{1,2,3\}$ with $a_i = x$ and $a_j = y$.
The divisibility condition with respect to $x$ and $y$ means that
such a triple
exists for any choice of $i$ and $j$, $i \ne j$. However, if that is true,
then the triple is determined uniquely since there are exactly six choices
for $(i,j)$.  The divisibility hence implies the uniqueness of divisions.
\end{proof}

Put $Q = X(\cdot)$ and denote by $Q\op$ the binary system with operation
$x*y= yx$. Of course, $Q\op$ is a quasigroup if and only if $Q$ is a quasigroup,
and is induced by the directed triple system
$\mathcal D\op= \{\langle z,y,x\rangle;$
$\langle x,y,z \rangle \in \mathcal D\}$.

\begin{thm}\label{13}
Let $\mathcal D$ be a directed triple system upon a set $X$. Define
a binary operation $\cdot$ on $X$ in such a way that $xy =z$,
$yz=x$ and $xz = y$ whenever $\langle x,y,z \rangle \in \mathcal D$,
and that $xx = x$ for all $x\in X$. Then $X(\cdot)$ is a quasigroup
if and only if for all $\langle x,y,z \rangle \in \mathcal D$ there
exist $x',y',z' \in X$ such that
\[ \langle z',y,x\rangle, \langle z,y',x \rangle, \langle z,y,x'\rangle \in
\mathcal D. \]
In such a case $z' = yx$, $y' = zx$ and $x'=zy$.
\end{thm}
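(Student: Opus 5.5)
By \lref{12}, $X(\cdot)$ is a quasigroup if and only if it is divisible. So the plan is to translate divisibility into statements about the triples of $\mathcal D$ and compare them with the three conditions in the statement. Since $xx=x$, divisibility is automatic when both elements coincide. Hence only pairs $a\ne b$ matter, and for these we need $u,v$ with $au=b$ and $va=b$. Every product $pq=r$ with $p\ne q$ arises from the unique triple containing the arrow $(p,q)$. Concretely, $\langle x,y,z\rangle$ yields exactly the three equalities $xy=z$, $yz=x$, $xz=y$.

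Fix $\langle x,y,z\rangle\in\mathcal D$ and list which of the six "solvability" requirements for pairs among $\{x,y,z\}$ this triple already meets. It supplies the left factor $u$ in $xu=z$ ($u=y$), in $yu=x$ ($u=z$), and in $xu=y$ ($u=z$). It supplies the right factor $v$ in $vy=z$ ($v=x$), in $vz=x$ ($v=y$), and in $vz=y$ ($v=x$). The requirements left uncovered are exactly three equations: $vx=y$, $vy=x$, and $ux=z$ (equivalently $zu=\dots$ after relabeling). More cleanly, the arrows $(y,x)$, $(z,y)$, $(z,x)$ lie in other triples, and we must check how those triples can produce the missing equations.

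For the forward direction, assume the quasigroup property and set $z'=yx$. The triple through $(y,x)$ has vertex set $\{y,x,z'\}$, and $z'\notin\{x,y\}$ by idempotency and Latin-ness. I will check which of the three directed triples with arrow $(y,x)$ is compatible with the products forced by $\langle x,y,z\rangle$. The option $\langle y,x,z'\rangle$ gives $yz'=x$, and the option $\langle y,z',x\rangle$ gives $yx\ne z'$ unless things collapse. Using left/right cancellation against $yz=x$ and $xz=y$, I expect to rule out all but $\langle z',y,x\rangle$. The arguments for $(z,y)$ giving $\langle z,y,x'\rangle$ with $x'=zy$, and for $(z,x)$ giving $\langle z,y',x\rangle$ with $y'=zx$, are analogous, possibly using $Q\op$ and $\mathcal D\op$ for symmetry. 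This cancellation case analysis is the main obstacle: for each arrow there are three placements, and each wrong placement must be shown to contradict cancellation. I will try the placements one at a time, checking for a repeated value in a row or column.

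For the converse, assume the three opposite-oriented triples exist. For each pair $a\ne b$ we must exhibit $u$ with $au=b$ and $v$ with $va=b$. Take the triple $\langle x,y,z\rangle$ containing the arrow $(a,b)$. Then $\{a,b\}$ is one of $\{x,y\}$, $\{y,z\}$, $\{x,z\}$, with either ordering. In each of these six cases I will read the required $u,v$ off either $\langle x,y,z\rangle$ itself or one of $\langle z',y,x\rangle$, $\langle z,y',x\rangle$, $\langle z,y,x'\rangle$. For example, $vx=y$ holds with $v=z'$ from $\langle z',y,x\rangle$, since that triple gives $z'x=y$. Each of the six pair-directions then has both divisions available, so \lref{12} shows $X(\cdot)$ is a quasigroup. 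Finally, the identities $z'=yx$, $y'=zx$, $x'=zy$ are read directly off the defining rules of the three opposite-oriented triples.
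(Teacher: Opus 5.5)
Your route is the paper's: reduce to divisibility by \lref{12}. In the forward direction you show that the triple through each reversed arrow $(y,x)$, $(z,y)$, $(z,x)$ must have the stated shape, by excluding the other two placements with cancellation. In the converse you read the missing divisions off the four triples. The converse is fine. For the arrows $(x,y)$, $(y,z)$, $(x,z)$ of $\langle x,y,z\rangle$, the divisions not supplied by that triple are $vx=y$, $yu=z$, $vx=z$, and these are solved by $z'$, $x'$, $y'$ respectively. Your list in the second paragraph has $vy=x$ in place of $yu=z$, which is a harmless slip.

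The forward direction, however, is only announced, and the one concrete claim you make in it is false. The triple $\langle y,z',x\rangle$ gives $yx=z'$, which agrees with your definition of $z'$, so no contradiction comes from there. You also never state the fact that turns ``cancellation forces $z'=z$'' into a contradiction. That fact is that in a wrong placement, $z'=z$ would put an arrow of $\langle x,y,z\rangle$ into a second triple.

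Here is what closes it.
\begin{itemize}
\item Arrow $(y,x)$: both $\langle y,x,z'\rangle$ and $\langle y,z',x\rangle$ give $yz'=x=yz$ (you found this for the first). Moreover $z'\ne z$, since $\langle y,x,z\rangle$ and $\langle y,z,x\rangle$ both contain the arrow $(y,z)$. So left cancellation fails.
\item Arrow $(z,y)$ with third vertex $w$: both $\langle z,w,y\rangle$ and $\langle w,z,y\rangle$ give $wy=z=xy$. Here $w=x$ would re-cover $(x,y)$, so right cancellation fails.
\item Arrow $(z,x)$ with third vertex $w$: $\langle z,x,w\rangle$ gives $xw=z=xy$, and $w=y$ would re-cover $(x,y)$. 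Also $\langle w,z,x\rangle$ gives $wz=x=yz$, and $w=y$ would re-cover $(y,z)$.
\end{itemize}
With these checks inserted your argument is complete and matches the paper's. The paper differs only cosmetically: it uses $xz'=y=xz$ for $\langle y,x,z'\rangle$, and it handles $(z,y)$ by passing to $\mathcal D\op$.
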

\begin{proof}
Consider $x,y,z \in X$ such that $\langle x,y,z \rangle \in \mathcal D$.
Suppose first that
$\langle y,z',x \rangle \in \mathcal D$ for some $z'\in X$.
Then $z'\ne z$ since $(y,z)$ cannot be covered twice, and so $yz' = x$
implies that $X(\cdot)$ is not a quasigroup. Similarly, we cannot get
a quasigroup if $\langle y,x,z' \rangle \in \mathcal D$ since then
$xz' = y$. We have thus shown that if $X(\cdot)$ is a quasigroup, then
there exists $z'\in X$ with $\langle z',y,x \rangle \in \mathcal D$.
In such a case
\[ z'y = x, \quad z'x = y \quad \text{and} \quad yx = z'.\]
By taking into account that $xy = z$, $yz = x$ and $xz =y$, we see
that the divisibility condition is satisfied with respect to $x$ and $y$.

By turning to $\mathcal D\op$ we get that if $X(\cdot)$ is a quasigroup,
then there exists $x'\in X$ such that $\langle z,y,x' \rangle \in \mathcal D$.
Then
\[zy = x', \quad yx'=z \quad \text{and} \quad zx' = y,\]
and $z$ and $y$ satisfy the divisibility condition.

If there exists $y ' \in X$ with $\langle z,x,y' \rangle \in \mathcal D$,
then $X(\cdot)$ is not a quasigroup by $xy' = z$. We also do not get a
quasigroup if $\langle y',z,x \rangle \in \mathcal D$ since then $y'z = x$.
Hence there exists $y' \in X$ with $\langle z,y',x \rangle \in \mathcal D$
if $X(\cdot)$ is a quasigroup, and then
\[ zy' = x, \quad y'x = z \quad \text{and} \quad zx = y',\]
which supplies the divisibility for $x$ and $z$.

We have seen that the existence of $x',y',z'\in Q$ that satisfy the condition
of the theorem is necessary if $X(\cdot)$ is a quasigroup. We have also
observed that if such
$x'$, $y'$ and $z'$ exist, then the operation $\cdot$ is divisible. That
makes $X(\cdot)$ a quasigroup by \lref{12}.
\end{proof}

\tref{13} thus yields a characterization of LDTS. Our next aim is
to characterize quasigroups $X(\cdot)$ in
terms of the binary operation. Such a quasigroup clearly satisfies
condition (i) of \lref{14}. The lemma
is included to show how the characterization of \tref{16} was discovered.

\begin{lem}\label{14}
Let $Q$ be an idempotent quasigroup. The following properties are equivalent:
\begin{enumerate}
\item[(i)] If $x,y \in Q$, and $a,b \in \{x,y,xy\}$, then
$\{ab,ba\} \cap \{x,y,xy\} \ne \emptyset$.
\item[(ii)] If $x,y \in Q$, then $y \in \{x\cdot xy,xy\cdot x\}$
and $x \in \{xy\cdot y, y \cdot xy\}$.
\item[(iii)] If $x,y \in Q$, then both of the following are true
\begin{enumerate}
\item[(a)] $y =x \cdot xy$ or $y = xy \cdot x$, and
\item[(b)] $y = yx \cdot x$ or $y = x \cdot yx$.
\end{enumerate}
\end{enumerate}
\end{lem}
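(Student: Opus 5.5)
The plan is to prove (ii)$\Leftrightarrow$(iii) by renaming variables and (i)$\Leftrightarrow$(ii) by a short case analysis. Both rest on one observation about an idempotent quasigroup $Q$: if $x\ne y$, then $xy\notin\{x,y\}$. Indeed, $xy=x=xx$ forces $y=x$ by left cancellation, and $xy=y=yy$ forces $x=y$ by right cancellation. Hence for $x\ne y$ the set $\{x,y,xy\}$ has exactly three elements.

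For (ii)$\Leftrightarrow$(iii), condition (iii)(a) is literally the first half of (ii). The second half of (ii) says $x\in\{xy\cdot y,\,y\cdot xy\}$ for all $x,y$. Interchanging the names $x$ and $y$ turns this into $y\in\{yx\cdot x,\,x\cdot yx\}$ for all $x,y$, which is exactly (iii)(b). Since both statements are quantified over all $x,y$, they are equivalent.

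For (ii)$\Rightarrow$(i), I first note that condition (i) is symmetric in $a$ and $b$, since it only involves the set $\{ab,ba\}$. So it suffices to treat $a=b$ and the three unordered pairs of distinct elements.
\begin{enumerate}
\item[(1)] If $a=b$, then $ab=a$ lies in the set by idempotence.
\item[(2)] The pair $\{x,y\}$ gives $xy$, which lies in the set.
\item[(3)] The pair $\{x,xy\}$ is covered by the first half of (ii).
\item[(4)] The pair $\{xy,y\}$ is covered by the second half of (ii).
\end{enumerate}

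For (i)$\Rightarrow$(ii), the case $x=y$ is trivial, so assume $x\ne y$. Apply (i) with $a=x$, $b=xy$; this gives that $x\cdot xy$ or $xy\cdot x$ lies in $\{x,y,xy\}$. I then show neither product can equal $x$ or $xy$, using cancellation together with the observation above:
\begin{enumerate}
\item[(1)] $x\cdot xy=x=xx$ gives $xy=x$, a contradiction.
\item[(2)] $x\cdot xy=xy=xy\cdot xy$ gives $x=xy$, a contradiction.
\item[(3)] $xy\cdot x=x$ gives $xy=x$, a contradiction.
\item[(4)] $xy\cdot x=xy$ gives $x=xy$, a contradiction.
\end{enumerate}
Hence $y\in\{x\cdot xy,\,xy\cdot x\}$. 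The same argument with $a=xy$, $b=y$ excludes the values $y$ and $xy$ for $xy\cdot y$ and $y\cdot xy$, leaving $x\in\{xy\cdot y,\,y\cdot xy\}$. No step is a genuine obstacle; the only point needing care is this exclusion of the degenerate values, which is where idempotence and cancellation are used.
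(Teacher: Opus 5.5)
Your proof is correct and follows the same route as the paper: reduce to $x\ne y$, observe that (i) only needs checking for the pairs $\{x,xy\}$ and $\{xy,y\}$, which gives (ii), and obtain (iii)(b) from the second half of (ii) by swapping $x$ and $y$. Your cancellation argument excluding the values $x,xy$ (resp.\ $y,xy$) spells out what the paper leaves implicit in the phrase ``that is exactly the claim of condition (ii).''
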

\begin{proof} Since $Q$ is idempotent, we can consider only the case
$x \ne y$. Then $x$, $y$ and $xy$ are pairwise distinct.
Condition (i) needs a verification only for $\{a,b\} = \{x,xy\}$
and for $\{a,b\} = \{y, xy\}$, and that is exactly the claim of condition (ii).
The first part of (ii) can be expressed by (a), and the second part is (b)
with $x$ and $y$ exchanged.
\end{proof}

\begin{lem}\label{15}
Let $Q$ be a quasigroup such that for all $x,y \in Q$ there holds at least
one of the equalities $x \cdot xy =y = yx \cdot x$ and $ x \cdot yx = y =
xy \cdot x$. Then
\[ x \cdot xy = y \Leftrightarrow y = yx \cdot x
\quad \text{and} \quad x \cdot yx = y \Leftrightarrow xy \cdot x  = y.\]
All four equalities are true if and only if $xy = yx$. If $Q$ is
idempotent, then $xy = yx$ if and only if $\{x,y,xy\}$ is a
subquasigroup.
\end{lem}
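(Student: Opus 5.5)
The plan is a direct case analysis based on the hypothesis. For a pair $x,y$ write (A) for the pair of equalities $x\cdot xy=y=yx\cdot x$ and (B) for the pair $x\cdot yx=y=xy\cdot x$; by assumption (A) or (B) holds. The only tools needed are left and right cancellation in the quasigroup $Q$, together with idempotency for the final claim.

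For the first equivalence, suppose $x\cdot xy=y$. If also $yx\cdot x=y$ we are done. Otherwise (A) fails, so (B) holds, and in particular $x\cdot yx=y$. Then $x\cdot xy=x\cdot yx$, and left cancellation gives $xy=yx$. Now the other half of (B), $xy\cdot x=y$, becomes $yx\cdot x=y$, which is a contradiction.

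Conversely, suppose $yx\cdot x=y$ but $x\cdot xy\ne y$. Then (B) holds, so $xy\cdot x=y=yx\cdot x$. Right cancellation gives $xy=yx$, and then $x\cdot xy=x\cdot yx=y$, again a contradiction. The second equivalence, $x\cdot yx=y\Leftrightarrow xy\cdot x=y$, follows by the same argument with the roles of (A) and (B) interchanged.

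For the claim about all four equalities:
\begin{enumerate}
\item[(1)] If all four hold, then $x\cdot xy=y=x\cdot yx$, and left cancellation gives $xy=yx$.
\item[(2)] If $xy=yx$, at least one of (A) and (B) holds. Substituting $xy=yx$ turns (A) into (B) and vice versa, so all four equalities hold.
\end{enumerate}

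For the last sentence, assume $Q$ is idempotent. The case $x=y$ is trivial, so let $x\ne y$. In an idempotent quasigroup $xy=x=xx$ forces $y=x$, and $xy=y=yy$ forces $x=y$. Hence $x,y,xy$ are pairwise distinct.

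If $\{x,y,xy\}$ is a subquasigroup, it is an idempotent quasigroup of order $3$. Its multiplication table is forced: each off-diagonal product must be the third element. This is the Steiner quasigroup, which is commutative, so $xy=yx$.

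Conversely, suppose $xy=yx$. Applying the previous part to the pair $(x,y)$ gives $x\cdot xy=y$ and $xy\cdot x=y$. Applying it to the pair $(y,x)$, where also $yx=xy$, gives $y\cdot xy=x$ and $xy\cdot y=x$. Together with idempotency, these show that $\{x,y,xy\}$ is closed under multiplication. A finite subset of a quasigroup that is closed under multiplication is a subquasigroup.

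The only mildly delicate step is recognising that each equivalence is proved by contradiction through cancellation, using the alternative branch of the hypothesis; everything else is direct substitution.
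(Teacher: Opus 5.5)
Your proof is correct and takes the same approach as the paper: each direction of the two equivalences is refuted by passing to the other branch of the hypothesis and cancelling. You also write out the mirror cases and the last two claims (the four equalities iff $xy=yx$, and the idempotent subquasigroup criterion), which the paper leaves to the reader.
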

\begin{proof} We shall argue by contradiction. There are four possible
violations of our claim. It will suffice to consider just two of them since
the other two follow by a mirror argument.

First, let $x\cdot xy = y$ and $yx \cdot x \ne y$. Then $x \cdot yx = xy \cdot x
=y$. Thus $x \cdot xy = y = x \cdot yx$, and hence $xy = yx$. That yields
$yx \cdot x = xy \cdot x = y$, a contradiction.

Second, let $x\cdot yx = y$ and $xy \cdot x \ne y$. Then $x\cdot xy = yx \cdot x
= y$. Thus $x \cdot yx = x \cdot xy$, $xy = yx$, and $xy \cdot x = yx \cdot
x = y$, a contradiction.
\end{proof}

Call $Q$ a \emph{DTS quasigroup} if $Q$ can be obtained from an LDTS $\mathcal
D$.

\begin{thm}\label{16}
Let $Q$ be an idempotent quasigroup. Then $Q$ is a DTS quasigroup if and only if
all $x,y \in Q$ satisfy
\begin {enumerate}
\item[(i)] $x\cdot xy = y = yx \cdot x$ or $xy \cdot x=y=x\cdot yx$, and
\item[(ii)] $xy \cdot x = y$ implies $xy \cdot y = x$.
\end{enumerate}
\end{thm}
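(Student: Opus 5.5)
We prove both directions by passing between the triples of $\mathcal D$ and the multiplication table, using \tref{13} and \lref{15}.

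\emph{Necessity.} Let $Q$ come from an LDTS $\mathcal D$ and take $x\ne y$. The arrow $(x,y)$ lies in a unique triple of $\mathcal D$, and there are three cases according to the position of $(x,y)$ in it.

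If the triple is $\langle x,y,z\rangle$, then $z=xy$. The definition of the operation gives $xz=y$, i.e.\ $x\cdot xy=y$. By \tref{13} there is $z'$ with $\langle z',y,x\rangle\in\mathcal D$ and $z'=yx$. That triple gives $z'x=y$, i.e.\ $yx\cdot x=y$. So the first alternative of (i) holds.

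If the triple is $\langle y',x,y\rangle$, then $xy=y'$ and $y'y=x$, so $xy\cdot y=x$. By \tref{13} applied to this triple, $\langle y,x,x''\rangle\in\mathcal D$ for some $x''$. Then $yx=x''$ and $x''y=\dots$; more to the point, $yx''=x$ wait — from $\langle y,x,x''\rangle$ we read $yx=x''$ and $yx''=x$, i.e.\ $y\cdot yx=x$. Applying the first case to the pair $(y,x)$ then gives the required identities for $x,y$.

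If the triple is $\langle x,z,y\rangle$, then $xy=z$ and $zy=x$. \tref{13} also supplies the reversed-position triple, namely $\langle y,z',x\rangle$ with $z'=yx$. Reading off both triples shows that $xy\cdot x=y=x\cdot yx$, which is the second alternative of (i).

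For (ii), suppose $xy\cdot x=y$ and put $z=xy$. If the triple carrying $(x,y)$ is $\langle x,y,z\rangle$, the Steiner/commutative case of \lref{15} handles it. Otherwise $zx=y$ forces $(z,x)$ to lie in the triple with vertex set $\{x,y,z\}$. With $xy=z$ this pins down the triple as $\langle x,z,y\rangle$ or $\langle y,x,z\rangle$, and both give $zy=x$. In practice it is cleanest to check (i) and (ii) by direct case analysis on the six orderings of the triple containing $\{x,y\}$.

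\emph{Sufficiency.} Assume (i) and (ii). Define $\mathcal D$ by declaring $\langle x,y,xy\rangle\in\mathcal D$ whenever $x\ne y$ and $x\cdot xy=y=yx\cdot x$. The steps are:

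\begin{enumerate}
\item[(a)] Show the three arrows of $\langle x,y,z\rangle$, $z=xy$, induce the right products, i.e.\ $yz=x$ and $xz=y$. We have $xz=y$ by assumption. For $yz=x$, apply (i) to the pair $(x,z)$ and use \lref{15} together with (ii) to exclude the bad alternative; this should give $y\cdot xy=x$.
\item[(b)] Show every arrow is covered exactly once. Each arrow $(a,b)$ is the first arrow of the triple defined by the pair $(a,b)$ if $a\cdot ab=b$. Otherwise, by (i), $ab\cdot a=b$, and (ii) gives $ab\cdot b=a$. From this we locate a pair whose defined triple contains $(a,b)$ as its second or third arrow. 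Uniqueness of that triple follows because the third vertex is determined by the quasigroup operation.
\item[(c)] Conclude that the operation induced by $\mathcal D$ is $\cdot$, so $Q$ is a DTS quasigroup. Steiner triples, where $xy=yx$, are handled by \lref{15}.
\end{enumerate}

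The main obstacle is step (a), deriving $y\cdot xy=x$ (equivalently $xy\cdot y=x$ in mirrored form). This is the only place where (ii) is genuinely needed beyond (i). The plan is to apply (i) to the pairs $(x,xy)$ and $(xy,x)$, eliminate alternatives using the equivalences of \lref{15}, and invoke (ii) to rule out the remaining case.
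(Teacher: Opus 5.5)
Your sufficiency construction fails at its first step. Declaring $\langle x,y,xy\rangle\in\mathcal D$ whenever $x\cdot xy=y=yx\cdot x$ assumes that a key pair always occupies the first two positions of its triple. The claim in (a), that $y\cdot xy=x$ then follows from (i) and (ii), is false. Take any proper DTS quasigroup; it satisfies (i) and (ii) by the necessity part. Fix a unidirectional $\langle a,b,c\rangle\in\mathcal D$ and the triples $\langle c,b,a'\rangle$, $\langle c,b',a\rangle$ supplied by \tref{13}. The pair $(a,c)$ is key, since $a\cdot ac=ab=c$ and $ca\cdot a=b'a=c$. But $c\cdot ac=cb=a'\ne a$, because $a'=a$ would give $cb=bc$ and make $\{a,b,c\}$ Steiner by \lref{11}. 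So your rule puts both $\langle a,b,c\rangle$ and $\langle a,c,b\rangle$ into $\mathcal D$, and these share the arrows $(a,b)$ and $(a,c)$. This is also why the uniqueness argument in (b) does not work: the operation determines the vertex set $\{a,b,ab\}$ of the triple through $(a,b)$, but not the order of its vertices.

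The missing idea is that the position of $(x,y)$ in its triple depends on the types of \emph{both} $(x,y)$ and $(y,x)$. If $(x,y)$ is key and $(y,x)$ semisymmetric, the triple is $\langle x,y,xy\rangle$. If both are key, it is $\langle x,xy,y\rangle$. If $(x,y)$ is semisymmetric and $(y,x)$ key, it is $\langle xy,x,y\rangle$. Condition (ii), combined with \lref{15}, says exactly that a semisymmetric $(x,y)$ has $(y,x)$ key. So for $xy\ne yx$ it rules out the semisymmetric/semisymmetric combination, which would be a cyclic triple; that, not step (a), is where (ii) enters. The paper defines $\mathcal D$ by these three cases, together with Steiner triples for commuting pairs. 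It checks by direct computation that a triple produced by one case is produced again from its other two arrows by the other two cases, and it uses \lref{15} to see that two cases cannot hold for the same noncommuting pair.

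The same misconception appears in your necessity argument. For $\langle x,z,y\rangle$ it is the \emph{first} alternative of (i) that holds: $x\cdot xy=xz=y$ and $yx\cdot x=z'x=y$. The second alternative $xy\cdot x=zx=y$ would force $zx=xz$. For $\langle y',x,y\rangle$, read off the second alternative directly: $xy\cdot x=y'x=y$ and $x\cdot yx=xx''=y$. Applying the first case to $(y,x)$ only tells you about the pair $(y,x)$. Finally, in your treatment of (ii), $\langle y,x,z\rangle$ should be $\langle z,x,y\rangle$.
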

\begin{proof} Let $\mathcal D$ be an LDTS on $X$ such that $X(\cdot)$ is a
quasigroup. Assume that $\langle a,b,c \rangle \in \mathcal D$. By \tref{13}
there exist $a',b',c' \in X$ with $\langle c,b,a'\rangle$, $\langle
c,b',a \rangle$, $\langle c',b,a\rangle \in \mathcal D$. We get the following
table:
\[\begin{array}{|c|c|c|c|c|c|}
\hline
x & y & x\cdot xy & yx \cdot x & xy\cdot x & x \cdot yx \\
\hline
a & b & b & b & b' & ac' \\
a & c & c & c & c' & ab' \\
b & c & c'& a'b  & c  & c \\
\hline
\end{array}.
\]
We see that (i) is obviously true and that (ii) holds if $(x,y) = (b,c)$.
For the other cases of (ii) note that $\{a,b,c\}$ is a Steiner triple
if $b'=b$ or $c'=c$ and that
$xy\cdot x = y$ in every Steiner quasigroup.
Hence (i) and (ii) hold in every
quasigroup that is induced by an LDTS.

Suppose now that (i) and (ii) are true. Define $\mathcal D$
so that $\{x,y,xy\}$ is a Steiner triple if $x\ne y$ are
elements of $X$ such that $xy = yx$. If $xy \ne yx$ let
$(x,y)$ determine the following element of $\mathcal D$:
\begin{enumerate}
\item[(1)] $\langle x,y,xy \rangle$ if $x\cdot xy = yx \cdot x = y$ and
$y \cdot xy = yx \cdot y = x$;
\item[(2)] $\langle x,xy,y \rangle$ if $x \cdot xy = yx \cdot x = y$
and $xy \cdot y = y\cdot yx = x$; and
\item[(3)] $\langle xy,x,y\rangle$ if $xy\cdot x = x \cdot yx =y$
and $xy\cdot y = y \cdot yx = x$.
\end{enumerate}
Every pair $(x,y)$ is covered by a triple from $\mathcal D$. That follows
from our assumption and from \lref{15}. The question is whether two
triples have to agree if they agree in one of the directed edges.
First we shall observe that none of the directed edges that is carried
by a triple determined by (1--3) can appear in a Steiner triple. For
that it is enough to show that any of $x\cdot xy = xy \cdot x$ and $y \cdot xy =
xy \cdot y$ implies $xy = yx$.
That follows from \lref{15}.

Now we shall show that each of conditions (1), (2) and (3) determines
the same set of triples. Assume that $(x,y)$ satisfies (1).
In the next paragraph we shall
observe that then (a) $(x',y') = (x,xy)$ satisfies (2), (b)
$(x'',y'') = (y,xy)$ satisfies (3), and that in both cases we obtain the triple
$\langle x,y,xy \rangle$ again. It follows that a triple determined
by (1) can be determined by (2) and (3) as well. We shall then make
a similar argument starting from (2), and from (3).

By \lref{15} each of conditions (1--3) contains twice more equalities
than needed. When verifying (a) or (b) we shall prove only
one equality for each pair. For (a) note that $x'y'=x\cdot xy = y$,
$x (x\cdot xy) = xy$
and $(x \cdot xy) \cdot xy = y \cdot xy = x$.
For (b) observe that $x'' y''=y\cdot xy = x$,
$ (y \cdot xy) y = xy$ and $ (y \cdot xy)\cdot xy = x \cdot xy = y$.

Assume now (2). We shall show that (a) $(x',y') = (x,xy)$ satisfies (1),
(b) $(x'',y'') = (xy,y)$ satisfies (3), and that both (a) and (b)
yield $\langle x,xy,y \rangle$. We have (a) $x' y' = x \cdot xy = y$,
$x(x\cdot xy) = xy$ and $xy \cdot (x \cdot xy) = xy \cdot y = x$. Furthermore,
(b) $x'' y'' = xy\cdot y = x$, $(xy\cdot y)\cdot xy = x\cdot xy = y$
and $(xy\cdot y) y = xy$.

Finally assume (3). We need to show that (a) $(x',y') =(xy,x)$ satisfies
(1), (b) $(x'',y'') = (xy,y)$ satisfies (2), and that in both cases
we obtain $(xy,x,y)$. Now, (a) $x'y' = xy \cdot x = y$, $xy\cdot(xy \cdot x) =
xy \cdot y = x$ and $x\cdot (xy \cdot x) = xy$, while (b) $x'' y'' =
xy \cdot y = x$, $xy \cdot (xy \cdot y) = xy \cdot x = y$, and
$(xy \cdot y ) y = xy$.

Suppose now that a directed edge $(x,y)$ is covered in two ways. We have
proved that if in one case a Steiner triple is involved, then it is involved
in the other case as well. Since $x$ and $y$ cannot appear in two
different Steiner triples, we can  assume that none of them appears
in a Steiner triple. Thus $xy \ne yx$.

Since each of (1--3) determines the same
set of directed triples we need to consider only
the case when for the given $(x,y)$ there
are true two of conditions (1-3). However, that easily gives $xy = yx$,
a contradiction.
\end{proof}

Laws $x\cdot xy = y$ and $yx \cdot x = y$ are known as the \emph{left}
and \emph{right key laws}, respectively. \tref{16} can be thus rephrased
by saying that DTS quasigroups are those idempotent quasigroups in which
(i) every pair $(x,y)$ is a \emph{key} pair or a \emph{semisymmetric} pair,
and (ii) if $(x,y)$ is semisymmetric, then $(y,x)$ is key. One can ask
what happens when condition (ii) is removed. Then we obtain quasigroups
that can be induced by \emph{hybrid} triple systems \cite{CPR},
i.e.~triple systems which may contain both cyclic and directed
triples. This will be described in detail in a future paper.

\begin{prop}\label{17} Let $Q$ be an idempotent quasigroup. Then $Q$
is a DTS quasigroup if and only if $xy = z$ implies \\
$\phantom{i}
 $ (a)\ $xz =y$ and $yz = x$, \\
$\phantom{i}  \quad \phantom{\alpha}$ or \\
$\phantom{i}
 $ (b)\ $xz =y$ and $zy = x$, \\
$\phantom{i}  \quad \phantom{\alpha}$ or \\
$\phantom{i}
 $ ($c$)\ $zx = y$ and $zy = x$,\\
for all $x,y \in Q$.
\end{prop}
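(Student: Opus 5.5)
We prove the proposition by translating conditions (a), (b), (c) into the laws of \tref{16}. Throughout, write $z = xy$.

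\emph{Translation.} Condition (a) reads $x\cdot xy = y$ and $y\cdot xy = x$. Condition (b) reads $x\cdot xy = y$ and $xy\cdot y = x$. Condition (c) reads $xy\cdot x = y$ and $xy\cdot y = x$. These are exactly the defining equalities of cases (1), (2) and (3) in the proof of \tref{16}, each with one of the two redundant equalities of every pair omitted.

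\emph{Necessity.} Let $Q$ be a DTS quasigroup induced by an LDTS $\mathcal D$, and let $x \ne y$ (if $x=y$, then all of (a) holds by idempotency). Look at the triple of $\mathcal D$ that carries $(x,y)$.
\begin{enumerate}
\item[$\bullet$] If it is a Steiner triple, then $\{x,y,z\}$ is a Steiner subquasigroup, so all six relations hold and in particular (a) holds.
\item[$\bullet$] Otherwise it is a directed triple with vertex set $\{x,y,z\}$ that contains the edge $(x,y)$. There are three such triples: $\langle x,y,z\rangle$, $\langle x,z,y\rangle$ and $\langle z,x,y\rangle$. Reading off the definition of $\cdot$, these give (a), (b) and (c) respectively. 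For example, $\langle x,z,y\rangle$ gives $xz=y$ and $zy=x$.
\end{enumerate}

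\emph{Sufficiency, condition (i) of \tref{16}.} Assume that for every pair one of (a), (b), (c) holds, and fix $x,y$.
\begin{enumerate}
\item[$\bullet$] If (a) or (b) holds for $(x,y)$, then $x\cdot xy=y$, and we still need $yx\cdot x = y$. Apply the hypothesis to the pair $(y,x)$ with $w=yx$. Options (a) and (b) for $(y,x)$ both give $w\cdot x = \ldots$; more precisely, in (a) and (b) the second clause involves $x$, and one of them yields $wx=y$ directly. (b) for $(y,x)$ gives $yw=x$, $wx=y$; (c) gives $wy=x$, $wx=y$.
\item[$\bullet$] Option (a) for $(y,x)$ gives $yw=x$ and $xw=y$. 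Then $x\cdot yx = y = x\cdot xy$, so $xy=yx$ by left cancellation. Hence $yx\cdot x = xy\cdot x$, and $\{x,y,xy\}$ should close up. This is the delicate case: we try to get $xy\cdot x=y$ from (a) for $(x,y)$, namely from $y\cdot xy=x$ combined with commutativity of this pair.
\end{enumerate}
If $(x,y)$ satisfies (c), the mirror argument gives the second alternative of (i).

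\emph{Sufficiency, condition (ii) of \tref{16}.} Suppose $xy\cdot x=y$. If (c) holds for $(x,y)$, then $xy\cdot y = x$ immediately. If (a) or (b) holds, then $x\cdot xy = y = xy\cdot x$. We then use the same cancellation trick, applied to the pairs $(x,z)$ or $(z,x)$, to force commutativity, and from commutativity derive $xy\cdot y=x$.

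\emph{Alternative for sufficiency.} A cleaner route is to imitate the second half of the proof of \tref{16} directly. Define $\mathcal D$ by declaring that $(x,y)$ determines $\langle x,y,z\rangle$, $\langle x,z,y\rangle$ or $\langle z,x,y\rangle$ according as (a), (b) or (c) holds, with Steiner triples whenever $xy=yx$. Then show that:
\begin{enumerate}
\item[$\bullet$] each such triple is consistently determined by its other two edges, using the same verifications (a)/(b) as in that proof, which use only the equalities appearing here;
\item[$\bullet$] no edge is covered twice, since two distinct options holding at once force a cancellation such as $x\cdot xy = x\cdot yx$, i.e.\ $xy=yx$.
\end{enumerate}
This yields a DTS whose induced operation is $\cdot$, so $Q$ is a DTS quasigroup.

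\emph{Main obstacle.} The hardest part is the bookkeeping of the consistency step: when $(x,y)$ gives a triple, the edges $(x,z)$ and $(z,y)$ must give the same triple even though only the weaker conditions (a)--(c) are assumed for them. We would handle this by checking, for each of the three cases, which of (a), (b), (c) the derived pair satisfies, and ruling out the others by cancellation.
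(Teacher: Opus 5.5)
Your necessity argument is fine, and your second route for sufficiency is the paper's construction. The gap is the step you dispose of in one line, and that step is exactly where the work lies.

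You assert that if two of (a)--(c) hold for $(x,y)$, a cancellation like $x\cdot xy=x\cdot yx$ gives $xy=yx$. With $z=xy$, the equalities in (a)--(c) involve only the products $xz$, $yz$, $zx$, $zy$. The product $yx$ never occurs, so no cancellation among them can yield $xy=yx$. In \tref{16} this worked because cases (1)--(3) carry four equalities each (e.g.\ $yx\cdot x=y$). The equalities you call redundant are redundant only by \lref{15}, whose hypothesis is condition (i) of \tref{16} for \emph{all} pairs, and that is precisely what is not yet available.

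The same problem affects the rest of your sketch:
\begin{enumerate}
\item[$\bullet$] Your Steiner criterion ``$xy=yx$'' is unjustified. You need $\{x,y,xy\}$ to be closed, but if (a) holds for both $(x,y)$ and $(y,x)$, your equalities mention only $xz$ and $yz$, never $zx$ or $zy$.
\item[$\bullet$] Your first route stalls in the same place. In the ``delicate case'' you need $zx=y$, and no combination of $y\cdot xy=x$ with commutativity can produce it.
\item[$\bullet$] Passing to $Q\op$ turns (c) for $(x,y)$ into (a) for $(y,x)$. It returns the first alternative of (i) for $(y,x)$, not the second alternative for $(x,y)$, so the ``mirror argument'' does not do what you claim.
\end{enumerate}
By contrast, the consistency check you call the main obstacle is a direct verification.

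The missing idea is to apply the hypothesis to auxiliary pairs. Suppose $pq=qp=r$ and put $u=pr$. The options for $(p,r)$ give $pu=r=pq$ or $up=r=qp$, so $u=q$. The pair $(r,p)$ gives $rp=q$ in the same way, and by symmetry $qr=rq=p$. Hence $\{p,q,r\}$ is a subquasigroup.

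Two simultaneous options for $(x,y)$ always produce such a commuting pair inside $\{x,y,z\}$: (a) with (b) gives $yz=zy=x$, and (c) with (a) or (b) gives $xz=zx=y$. So $\{x,y,xy\}$ is closed. This is what the paper establishes first, using the pairs $(y,x)$, $(z,x)$ and $(y,z)$, before defining $\mathcal D$. With that lemma, take Steiner triples exactly on the closed sets $\{x,y,xy\}$ (equivalently, now, when $xy=yx$), and the rest of your outline goes through.
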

\begin{proof} Note that (a) can be rewritten as $x\cdot xy = y$
and $y\cdot xy = x$. By expressing (b) and (c) in a similar way we
see that the condition of the statement follows from \tref{16} immediately.
Of course, it is also easy to verify it directly from the assumption
that $Q$ is determined by a DTS $\mathcal D$.

To prove the converse we shall start by showing that $x$, $y$ and $z = xy$
form a (commutative) idempotent subquasigroup if at least two
of (a), (b) and (c) can be used for a given pair $(x,y)$.

Suppose first that (a) and (c) apply. Thus $xz=y=zx$ and $yz = zy = x$.
Put  $u=yx$ and consider conditions (a--c) with respect to the pair
$(y,x)$. Then (a,b) give $yu = x$ and (c) gives $uy = x$.
We see that both cases imply $u = z$.

Assume now (a) and (b). Then $xz = y$ and $yz = x = zy$. Put $u =zx$.
It suffices to show that $u =y$ since then the previous case can be used.
Consider $(z,x)$. Then (a,b) $zu = x$ and (c) $uz = x$. Thus $y=u$.

Finally, let (b) and (c) be true. Then $xz = y = zx$ and $zy = x$. It
suffices to show that $yz = x$. Put $u =yz$ and consider $(y,z)$.
Then (a) $zu = y$ and (b,c) $uz=y$.

Let us now define $\mathcal D$. Assume $x\ne y$ and put $z = xy$.
If $\{x,y,z\}$ forms a subquasigroup, take it as a Steiner triple.
If not, include (a) $\langle x,y,z\rangle$, (b) $\langle x,z,y\rangle$,
or (c) $\langle z,x,y\rangle$. We have proved that only one
of these cases applies. It is now clear that each directed edge
is covered by a triple of $\mathcal D$.

Assume (a) $xz = y$ and $yz =x$. Then $(x,z)$ fulfils (b) since
$xy = z$ and $yz = x$, and $(y,z)$ fulfils (c) since $xy =z $
and $xz =y$.

Assume (b) $xz =y$ and $zy = x$. Then $(x,z)$ fulfils (a) since
$xz = y$ and $zy = x$, and $(z,y)$ fulfils (c) since $xz = y$
and $xy =z$.

Assume (c) $zx = y$ and $zy = x$. Then $(z,x)$ fulfils (a) since
$zx =y$ and $xy = z$, and the same equalities imply that
$(z,y)$ fulfils (b).

Therefore any of the three directed edges of a triple from $\mathcal D$ can
be used to induce the triple. Hence a directed edge $(x,y)$ might induce
two different triples of $\mathcal D$ only if at least two of
the alternatives (a--c) apply to $(x,y)$. Above we have proved that then
$\{x,y,xy\}$ forms a Steiner triple. Each directed edge thus induces
only one triple of $\mathcal D$.
\end{proof}

It is true that a shorter proof could be obtained
by uniting \tref{16} and \pref{17} into one statement.
We did not do so for the purpose of future references since
we expect that the characterization of \tref{16} will be mentioned
in the future much more often than the condition of \pref{17}.

\begin{prop}\label{18}
The class of DTS quasigroups is closed under subquasigroups and under
homomorphic images. If both $Q$ and $Q\times Q$ are DTS quasigroups,
then $Q$ is a Steiner quasigroup.
\end{prop}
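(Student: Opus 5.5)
The plan is to work throughout with the characterization of \pref{17}, not that of \tref{16}. Its condition has the form ``$xy=z$ implies (a) or (b) or (c)'', where each alternative is a conjunction of two equations. So it is a universal Horn-like sentence with a disjunctive conclusion, and the first two closure claims become routine.

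\emph{Subquasigroups.} Let $S\le Q$. Then $S$ is an idempotent quasigroup. For $x,y\in S$ with $z=xy$, one of (a)--(c) holds in $Q$, and all elements involved lie in $S$, so the same alternative holds in $S$. By \pref{17}, $S$ is a DTS quasigroup.

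\emph{Homomorphic images.} Let $\varphi\colon Q\to P$ be a surjective homomorphism of quasigroups, taken in the signature $(\cdot,\bs,/)$ so that $P$ is again a quasigroup. Idempotency clearly passes to $P$. Given $u,v\in P$, choose preimages $x,y$ and put $z=xy$, so that $\varphi(z)=uv$. One of (a)--(c) holds for $(x,y,z)$ in $Q$, and applying $\varphi$ yields the same alternative for $(u,v,uv)$ in $P$. Again \pref{17} finishes the argument.

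\emph{The product.} The key input is the fact, established inside the proof of \pref{17}, that if two of the alternatives (a)--(c) hold for a pair $(x,y)$, then $\{x,y,xy\}$ is a subquasigroup, i.e.\ $xy=yx$. Hence for a noncommutative pair exactly one alternative applies, namely the one encoding the position of the edge $(x,y)$ in its directed triple.

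Now suppose $Q$ is not Steiner. By \lref{11} there is a non-Steiner triple $\langle x,y,z\rangle\in\mathcal D$, and the following hold:
\begin{itemize}
\item $(x,y)$ satisfies (a) only, since $xy=z$, $xz=y$, $yz=x$;
\item $(x,z)$ satisfies (b) only, since $xz=y$, $xy=z$, $zy=x$, and the triple is not Steiner.
\end{itemize}
Consider in $Q\times Q$ the pair $\bigl((x,x),(y,z)\bigr)$, whose product is $(z,y)$. The operations of $Q\times Q$ are componentwise, so whichever alternative of \pref{17} holds for this pair in $Q\times Q$ holds simultaneously in both coordinates. But the first coordinate is the pair $(x,y)$, which admits only (a), and the second coordinate is the pair $(x,z)$, which admits only (b). This is a contradiction, so every triple is Steiner and $Q$ is commutative, hence a Steiner quasigroup.

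The only delicate point is the uniqueness of the alternative for noncommutative pairs, and this was already proved in \pref{17}. One must also check carefully that $(x,z)$ really fails (a) and (c). This follows from that uniqueness together with the noncommutativity $xz\ne zx$, which holds because the triple is not Steiner (\lref{11}).
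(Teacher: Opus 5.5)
Your proof is correct. Closure under subquasigroups and homomorphic images is handled exactly as in the paper, but the product part takes a different route.

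The paper fixes a non-Steiner $\langle x,y,z\rangle\in\mathcal D$ and tests condition (i) of \tref{16} on $X=(y,z)$, $Y=(x,y)$ in $Q\times Q$. It computes $YX\cdot X=(zy,y)$ and $X\cdot YX=(x,zx)$. Each equals $Y$ only if the triple is Steiner, so both alternatives of (i) fail.

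You work with \pref{17} instead, and your argument is more structural. A noncommutative pair has a unique ``type'' (a), (b) or (c), and types are computed coordinatewise in a product. Your pair $\bigl((x,x),(y,z)\bigr)$ glues a type-(a) pair to a type-(b) pair, so it has no type. The paper's version is a self-contained two-line computation. Yours explains why products fail, but it leans on the uniqueness fact established inside the proof of \pref{17}.

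You can drop that dependence. For your pair, (a) in the second coordinate yields $zy=x$, and so does (b) or (c) in the first coordinate. Each case contradicts $yz=x$ together with \lref{11}.

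One small slip: when you verify that $(x,z)$ satisfies (b), the required equations are $xy=z$ and $yz=x$, not $zy=x$. The latter is false for a non-Steiner triple; it is exactly the equation that would make (a) hold for $(x,z)$.
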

\begin{proof}
If $Q$ fulfils the condition of \pref{17}, then the condition
is clearly fulfilled both by subquasigroups and by homomorphic images.

Suppose now that $Q$ is a proper DTS quasigroup derived from $\mathcal D$.
Consider $\langle x,y,z \rangle \in \mathcal D$.
Then $(x,y)(y,z)\cdot (y,z) = (z,x)(y,z) = (zy,y)$
equals $(x,y)$ only if $\{x,y,z\}$ is a Steiner triple. However, that
is also true if $(y,z)\cdot (x,y)(y,z) = (y,z)(z,x) = (x,zx)$ equals $(x,y)$.
\end{proof}

\begin{prop} \label{19}
Let $Q$ be a DTS quasigroup. If $Q$ satisfies any of the laws
$x\cdot xy = y$, $yx\cdot x = y$, $xy \cdot x = y$ or $x\cdot yx = y$,
then $Q$ is a Steiner quasigroup.
\end{prop}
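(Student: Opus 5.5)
The idea is to look at a single non-Steiner directed triple and read off, from the table in the proof of \tref{16}, which pairs violate each of the four laws. Suppose $Q$ is induced by an LDTS $\mathcal D$ that is not an STS. Then $\mathcal D$ contains a directed triple $\langle a,b,c\rangle$ that is not Steiner, so the three vertices $a,b,c$ are distinct and $\{a,b,c\}$ is not a subquasigroup. Recall $ab=c$, $bc=a$ and $ac=b$. For each law we exhibit a concrete pair $(x,y)$ from $\{a,b,c\}$ on which the law fails. This contradicts the assumption, so $\mathcal D$ must be an STS and $Q$ is Steiner.

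The four cases are as follows.

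For the left key law $x\cdot xy=y$, take $(x,y)=(b,c)$. Then $b\cdot bc = ba$. By \lref{11}, $ba\ne ab=c$, since the triple is not Steiner. So $b\cdot bc\ne c$.

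For the right key law $yx\cdot x = y$, take $(x,y)=(a,b)$ read in the other order, i.e.\ $x=b$, $y=a$. Then $ab\cdot b = cb$. Because $bc=a$ and $b\ne c$, \lref{11} together with non-Steinerness gives $cb\ne bc=a$. So $ab\cdot b\ne a$.

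For $xy\cdot x=y$, take $(x,y)=(a,b)$. Then $ab\cdot a = ca$. Since $ac=b$, \lref{11} gives $ca\ne b$.

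For $x\cdot yx=y$, take $x=c$, $y=b$. Then $c\cdot bc = ca\ne b$, exactly as in the previous case.

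Alternatively, one can cite the table in \tref{16} directly. It shows that $xy\cdot x=b'$ for $(x,y)=(a,b)$, and $b'=b$ forces a Steiner triple; the other columns are handled the same way.

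The only point needing care is the use of \lref{11}. If $\{a,b,c\}$ is not a Steiner triple, then for every pair of distinct elements $u,v\in\{a,b,c\}$ we have $uv\ne vu$. This holds because the triple of $\mathcal D$ carrying $(u,v)$ is $\langle a,b,c\rangle$, so $uv\in\{a,b,c\}$, and \lref{11} then applies. I expect no real obstacle beyond keeping the orientations straight in each case.
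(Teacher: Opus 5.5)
Your proof is correct and is essentially the paper's argument. The paper also fixes a directed triple $\langle x,y,xy\rangle$ and shows that each law, combined with an identity read off from that triple, forces two of its vertices to commute, which by \lref{11} makes the triple Steiner. You phrase this contrapositively, with slightly different witness pairs and a direct computation in place of cancellation, and you write out all four cases rather than two plus a mirror argument.
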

\begin{proof} Let  $Q$ be determined by a set of triples $\mathcal D$.
Consider  $\langle x, y, xy\rangle\in \mathcal D$.
By \lref{11} we have to show that $xy = yx$. We have $y\cdot xy = x$ and
so $y\cdot yx = x$ yields $xy = yx$. We also have $x \cdot xy = y$,
and so $xy = yx$ follows from $x \cdot yx = y$. For the other cases
use a mirror argument (or consider $Q\op$).
\end{proof}

There are thus no proper semisymmetric or key DTS quasigroups. However,
there exist many proper flexible DTS quasigroups. Here we refer to
the \emph{flexible} law $x\cdot yx = xy\cdot x$.

\begin{lem}\label{110}
Let $Q$ be a DTS quasigroup determined by $\mathcal D$. Then $Q$
is flexible if $x\cdot yx = xy\cdot x$ for every $\langle x,xy,y\rangle
\in \mathcal D$.
\end{lem}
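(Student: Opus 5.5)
The plan is to check the flexible law $x\cdot yx = xy\cdot x$ pair by pair, for every ordered pair $(x,y)$, and reduce every case to the hypothesis. If $x=y$ the law holds by idempotency. If $x\ne y$, we split according to the triple of $\mathcal D$ that carries the directed edge $(x,y)$. Put $z = xy$.

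If $\{x,y,z\}$ is a Steiner triple, then it is a subquasigroup isomorphic to the idempotent quasigroup of order $3$. That quasigroup is totally symmetric, so $x\cdot yx = x\cdot z = y = z\cdot x = xy\cdot x$. Otherwise the triple is directed, and the edge $(x,y)$ can sit in it in three positions:
\begin{enumerate}
\item[(1)] the triple is $\langle x,y,z\rangle$;
\item[(2)] the triple is $\langle x,z,y\rangle$;
\item[(3)] the triple is $\langle z,x,y\rangle$.
\end{enumerate}
In cases (1) and (3) I will use \tref{16}(i) and \lref{15}, in the form of the case analysis in the proof of \tref{16}. In case (1) we have $x\cdot xy = y = yx\cdot x$ with $xy\ne yx$. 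By \lref{15} neither $x\cdot yx = y$ nor $xy\cdot x=y$ holds, so a direct argument is needed here.

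For case (1), the triple $\langle x,y,z\rangle$ gives $xz=y$. By \tref{13} there is $z'=yx$ with $\langle z',y,x\rangle\in\mathcal D$, and hence $z'x=y$. Therefore $xy\cdot x = zx$ and $x\cdot yx = xz' $. Applying \tref{13} again to $\langle x,y,z\rangle$ yields $y'=zx$ with $\langle z,y',x\rangle\in\mathcal D$. In this triple $(x,y',z)$ appears as $\langle z,y',x\rangle$, which has the shape $\langle a, ab, b\rangle$ with $a=z$, $b=x$, because $zx=y'$. The hypothesis applied to it gives $z\cdot xz = zx\cdot z$. I expect that such a shift lets us rewrite $x\cdot yx = xy\cdot x$ as an instance of the hypothesis, possibly after substituting $(x,y)$ by a related pair. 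In case (1), $(y,x)$ is carried by $\langle z',y,x\rangle$, which again has the form $\langle a,b,ab\rangle$.

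A cleaner route is to note that the identity $x\cdot yx = xy\cdot x$ is symmetric under replacing the pair. So the plan is to show that for every directed triple $\langle a,b,c\rangle$ the flexible law holds for the six ordered pairs drawn from $\{a,b,c\}$. Many of these instances are equivalent, since flexibility for $(x,y)$ with $y = $ something can be rewritten via the substitution $y\mapsto$ a quotient. I will tabulate, as in the table in the proof of \tref{16}, the values $x\cdot yx$ and $xy\cdot x$ for the three edges of $\langle a,b,c\rangle$, using $a',b',c'$ from \tref{13}:
\begin{itemize}
\item for the pair $(a,b)$: $ab\cdot a = ca = b'$ and $a\cdot ba = ac'$;
\item for the pair $(a,c)$: $ac\cdot a = ba = c'$ and $a\cdot ca = ab'$.
\end{itemize}
Thus flexibility at $(a,b)$ reads $ac' = b'$, and at $(a,c)$ it reads $ab'=c'$. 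Since $\langle c,b',a\rangle\in\mathcal D$, we have $b'a = c$, so the pair $(a,c)$ is of the form $(x,y)$ with $\langle x,xy,y\rangle$ exactly when $\langle a,b,c\rangle$ itself has middle $ab$, which it does. The hypothesis therefore gives flexibility at $(a,c)$, that is, $ab'=c'$. Multiplying by $a$ on the left and using the key law $a\cdot ac' = $ (from $\langle c',b,a\rangle$: $c'a=b$, $c'b = a$) should convert $ab'=c'$ into $ac'=b'$. The analogous statement for the pair $(b,c)$ comes out of the hypothesis applied to the triple $\langle c,b,a'\rangle$.

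The main obstacle is this last conversion step: proving that flexibility at one edge of a triple propagates to the others. I would try first to show that $ab'=c'$ is equivalent to $ac'=b'$ via \tref{16}(ii) applied to the pair $(a,c')$ or $(a,b')$. Edges in Steiner triples are handled separately, as above.
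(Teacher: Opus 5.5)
Your reduction is the same as the paper's. Steiner triples are trivial, and every ordered pair of distinct points is an edge of exactly one directed triple $\langle a,b,c\rangle$, where $b=ac$. (You need only those three edges, not six pairs. Also, the middle $b$ is $ac$, not $ab$.) The edge $(a,c)$ is exactly the hypothesis. However, the proposal does not prove the two remaining edges, so as written there is a gap.

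For $(a,b)$ you correctly reduce the goal to $ac'=b'$ and get $ab'=c'$ from the hypothesis. The conversion between them, which you yourself call the main obstacle, is never done. The key law ``$a\cdot ac'=$'' is left unfinished. The triple you cite for it, $\langle c',b,a\rangle$, only gives $c'a=b$, $c'b=a$ and $ba=c'$, none of which involves $ac'$. Your fallback, \tref{16}(ii), concerns semisymmetric pairs and does not directly give the left key law you need.

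The missing step is one more application of \tref{13}. Applied to $\langle c,b',a\rangle$, it gives $\langle a,b',ab'\rangle\in\mathcal D$, hence $a\cdot ab'=b'$, and so $ac'=a\cdot ab'=b'$. Alternatively, \tref{13} applied to $\langle c',b,a\rangle$ gives $\langle a,ac',c'\rangle\in\mathcal D$. Then $a\cdot ac'=c'=ab'$, and left cancellation gives $ac'=b'$. Note this uses cancellation, not ``multiplying by $a$''. The first version is exactly the paper's argument for the pair $(x,xy)$, via $\langle y,yx,x\rangle$ and $\langle x,yx,x\cdot yx\rangle$.

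The edge $(b,c)$ is also mishandled. The hypothesis applied to $\langle c,b,a'\rangle=\langle c,ca',a'\rangle$ gives flexibility at $(c,a')$, not at $(b,c)$. In fact no hypothesis is needed there. From $\langle a,b,c\rangle$ and $\langle c,b,a'\rangle$ you get $bc\cdot b=ab=c$ and $b\cdot cb=ba'=c$. This is the paper's ``immediate'' case $(xy,y)$.

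Your abandoned case (1) attempt failed for the same kind of reason. It applied the hypothesis to $\langle z,y',x\rangle$, but the relevant instance is $\langle x,y,z\rangle$ itself, with $x\cdot zx=xz\cdot x$. With these two steps supplied, your outline becomes the paper's proof.
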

\begin{proof} We need to show that the restricted assumption
of flexibility implies that $a\cdot ba = ab \cdot a$ for any pair
$(a,b)$, where $a$ and $b$ are distinct elements of $Q$. For that
it clearly suffices to consider the cases $(x,xy)$ and
$(xy,y)$, where $\langle x,xy,y\rangle\in \mathcal D$. The latter
case is immediate since $\langle y, xy, y\cdot xy\rangle \in \mathcal
D$ by \tref{13}, and hence $(xy)(y\cdot xy)= y = x \cdot xy =
(xy \cdot y)(xy)$. For the former case note that
$(x\cdot xy)x = yx$ and that $x(xy\cdot x) = x (x\cdot yx)$ is
equal to $yx$ since by \tref{13} we have $\langle y, yx, x \rangle
\in \mathcal D$ and $\langle x, yx, x\cdot yx \rangle \in \mathcal D$.
\end{proof}

The above lemma can be seen as a variation of \cite[Theorem~2.3]{ldts}.
Note that \cite{ldts} assumes that the set $X$ is finite, while
here we do not exclude the infinite sets. The next statement
corresponds to \cite[Theorem~2.2]{ldts}. It weakens the condition
of \tref{13}, but only for finite sets. Hence we include it without
a proof.

\begin{lem}\label{111} Let $\mathcal D$ be a DTS upon a finite set
$X$. Then $X(\cdot)$ is a quasigroup if and only if
for every $\langle x,y,z\rangle \in \mathcal D$ there exists
$z' \in X$ such that $\langle z',y,x\rangle \in \mathcal D$.
\end{lem}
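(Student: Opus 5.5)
The forward implication is immediate from \tref{13}: if $X(\cdot)$ is a quasigroup, then for each $\langle x,y,z\rangle\in\mathcal D$ the element $z'=yx$ satisfies $\langle z',y,x\rangle\in\mathcal D$. For the converse I would verify all three conditions of \tref{13}, obtaining the missing $x'$ and $y'$ from a counting argument. This is the only place where finiteness of $X$ enters.

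Assume the hypothesis and define $\varphi\colon\mathcal D\to\mathcal D$ by $\varphi(\langle x,y,z\rangle)=\langle z',y,x\rangle$. This is well defined, since the triple containing the directed edge $(y,x)$ is unique, so $z'$ is unique. The map $\varphi$ is injective: from $\varphi(T)=\langle z',y,x\rangle$ we recover the edge $(x,y)$, and $T$ is the unique triple of $\mathcal D$ carrying $(x,y)$. Since $\mathcal D$ is finite, $\varphi$ is a bijection. Given $\langle x,y,z\rangle\in\mathcal D$, write it as $\varphi(T)$. By the shape of $\varphi$ we get $T=\langle z,y,x'\rangle$ for some $x'$, which is exactly the second condition of \tref{13}.

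It remains to produce $y'$ with $\langle z,y',x\rangle\in\mathcal D$. Let $T$ be the triple containing the edge $(z,x)$. There are three possible shapes: $\langle z,x,w\rangle$, $\langle w,z,x\rangle$ or $\langle z,w,x\rangle$, and we must exclude the first two.
\begin{itemize}
\item If $T=\langle z,x,w\rangle$, the hypothesis gives $\langle w',x,z\rangle\in\mathcal D$. This triple carries the edge $(x,z)$, as does $\langle x,y,z\rangle$. Hence the two triples coincide, which forces $x$ to be both first and second entry, a contradiction.
\item If $T=\langle w,z,x\rangle$, the surjectivity consequence just proved yields $\langle x,z,v\rangle\in\mathcal D$. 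This triple again carries $(x,z)$, so it equals $\langle x,y,z\rangle$, whence $z=y$, a contradiction.
\end{itemize}
Thus $T=\langle z,y',x\rangle$ for some $y'$, and \tref{13} shows that $X(\cdot)$ is a quasigroup.

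The main obstacle is obtaining $x'$, because it requires reversing the direction of the hypothesis. The bijection $\varphi$ handles this, and that is why the lemma fails to be a formal consequence of \tref{13} for infinite $X$. The case analysis for $y'$ is then routine bookkeeping with the uniqueness of the triple covering a given directed edge.
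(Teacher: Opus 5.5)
Your proof is correct and complete. The map $\varphi$ is well defined and injective because each arc lies in exactly one triple of $\mathcal D$, and because a directed triple, as a set of three arcs, determines its ordering $\langle a,b,c\rangle$ (source, middle, sink). Finiteness makes $\varphi$ surjective, which gives $x'$. The two forbidden shapes for the triple carrying $(z,x)$ are correctly excluded by collisions on the arc $(x,z)$.

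The paper gives no proof of this lemma; it defers to \cite[Theorem~2.2]{ldts}, precisely because finiteness is needed. So there is no argument in the text to compare your route against.

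Your elimination of $\langle z,x,w\rangle$ and $\langle w,z,x\rangle$ follows the same case split as the proof of \tref{13}. The only change is that quasigroup cancellation is replaced by three ingredients: the hypothesis, the surjectivity of $\varphi$, and the uniqueness of the triple covering $(x,z)$.

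Since $\varphi$ fixes the middle point, its orbits on the unidirectional triples with middle point $x$ are exactly the caps $\langle y_0,x,y_1\rangle,\langle y_1,x,y_2\rangle,\dots,\langle y_k,x,y_0\rangle$ of \secref{3}, traversed backwards. A Steiner triple instead gives an orbit of length $2$. So the cycle structure of your bijection is exactly the residual-face structure the paper uses later.
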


Let us finish this section by a remark, that an LDTS $\mathcal D$
is pure if and only if the corresponding quasigroup is
\emph{anticommutative} (i.e.~$xy = yx$ implies $x=y$). This follows,
say, from \lref{11}.

\section{From quasigroups to loops}\label{2}

A standard way how to prolong an idempotent quasigroup $Q$
into a loop $Q_1$ consists of adding a (new) neutral element $1$
and setting $x^2 =1$ for all $x \in Q$ (the loop $Q_1$ is
\emph{involutory}).

A loop will be called a \emph{DTS loop} if it can be obtained as
a prolongation of a DTS quasigroup. (Similarly we define
\emph{Steiner} and \emph{Mendelsohn} loops.)

If $x,y \in Q$ are such that $x\cdot xy = y$ (or $yx \cdot x = x$,
or $x\cdot yx = y$ or $xy \cdot x = y$), then the respective
identity holds in $Q_1$ as well, and vice versa. Hence Mendelsohn
loops coincide with semisymmetric loops, and \tref{16} can be
alternatively expressed as:

\begin{thm}\label{21}
A loop $Q_1$ is a DTS loop if and only if for all $x,y \in Q_1$
\begin{enumerate}
\item[(i)] $x\cdot xy = y = yx \cdot x$ or $xy \cdot x=y=x\cdot yx$, and
\item[(ii)] $xy \cdot x = y$ implies $xy \cdot y = x$.
\end{enumerate}
\end{thm}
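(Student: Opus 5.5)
The plan is to reduce \tref{21} to \tref{16}. Two facts do the work: the prolongation $Q\mapsto Q_1$ is invertible, and conditions (i) and (ii) can be checked pair by pair, with every pair involving $1$ or a repeated element holding automatically.

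First, the passage between loops and quasigroups. If $Q_1$ is a DTS loop, it is $Q\cup\{1\}$ with $Q$ a DTS quasigroup. Conversely, take any loop $Q_1$ satisfying (i) and (ii). With $y=1$, condition (i) is trivial. With $y=x$, it reads $x\cdot x^2=x=x^2\cdot x$ (both alternatives coincide), so we need one more fact. From $x\cdot xy=y$ or $x\cdot yx=y$ for all $y$ we try to derive $x^2=1$; I expect this step to be the main obstacle. For $x\neq 1$, put $e=x^2$. Apply (i) to the pair $(x,x)$: it gives $x\cdot e=x$, hence $e=1$ by cancellation in the loop. So $Q_1$ is involutory.

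Next put $Q=Q_1\setminus\{1\}$. For $x\neq y$ in $Q$ we have $xy\neq 1$, since $xy=1=xx$ would force $y=x$. We also need $xy\neq x,y$, which holds because $xy=x$ gives $y=1$. Define $Q$ to be idempotent by setting $x\circ x=x$ and $x\circ y=xy$ for $x\ne y$. Then $Q$ is a quasigroup: row $x$ of $Q_1$ is a bijection sending $1\mapsto x$ and $x\mapsto 1$, so after swapping these values it becomes a permutation of $Q$. Thus $Q_1$ is exactly the prolongation of $Q$.

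Then comes the comparison of identities, which the paper already notes. For $x,y\in Q$ with $x\ne y$, all products inside the four words $x\cdot xy$, $yx\cdot x$, $xy\cdot x$, $x\cdot yx$ involve distinct non-identity elements, except where equalities such as $x\cdot xy=y$ are being tested. A short check shows that each of these identities holds in $Q$ if and only if it holds in $Q_1$. The point is that $x\circ(x\circ y)=y$ and $x(xy)=y$ both hold or fail together, since $x\ne xy$ and the product is computed identically unless it equals $1$ or coincides with a diagonal case. The same comparison applies to the identity $xy\cdot y=x$ in (ii).

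For the remaining pairs the identities hold trivially in both structures. In $Q$ the diagonal pairs satisfy everything by idempotence. In $Q_1$ the pairs involving $1$ or $x=y$ satisfy (i) by $x^2=1$, and they satisfy (ii) as well: $xy\cdot x=y$ with $y=x$ gives $1\cdot x=x$, and then $xy\cdot y=1\cdot x=x$ holds. Hence (i) and (ii) hold in $Q_1$ if and only if they hold in $Q$. By \tref{16} this is equivalent to $Q$ being a DTS quasigroup, that is, to $Q_1$ being a DTS loop.
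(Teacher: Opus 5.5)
Your proof is correct and takes the same route as the paper. The paper notes that for $x\ne y$ in $Q$ each of the identities $x\cdot xy=y$, $yx\cdot x=y$, $xy\cdot x=y$, $x\cdot yx=y$ (and $xy\cdot y=x$) holds in $Q$ exactly when it holds in the prolongation $Q_1$, and then reads the statement off \tref{16}. You additionally spell out what the paper leaves tacit: (i) at $y=x$ gives $x\cdot x^2=x$ and so forces $x^2=1$; $Q$ is recovered from $Q_1$; and the pairs involving $1$ or $x=y$ are harmless (for (ii) at the pair $(1,y)$ this uses $y^2=1$, which you did not write out).
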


\begin{prop}\label{22}
A loop $Q_1$ is a DTS loop if and only if $xy = z$ implies \\
$\phantom{i}
 $ (a)\ $xz =y$ and $yz = x$, \\
$\phantom{i}  \quad \phantom{\alpha}$ or \\
$\phantom{i}
 $ (b)\ $xz =y$ and $zy = x$, \\
$\phantom{i}  \quad \phantom{\alpha}$ or \\
$\phantom{i}
 $ ($c$)\ $zx = y$ and $zy = x$,\\
for all $x,y \in Q_1$.
\end{prop}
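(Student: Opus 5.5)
The plan is to reduce \pref{22} to \pref{17}, in the same way \tref{21} was obtained from \tref{16}. Let $Q_1$ be a loop. If $Q_1$ is a DTS loop, then $Q_1 = Q\cup\{1\}$, where $Q$ is a DTS quasigroup, $1$ is the neutral element, and $x^2=1$ for $x\in Q$. Conversely, suppose $Q_1$ satisfies the condition of the statement. Taking $x=y$ gives $z=x^2$, and each of (a), (b), (c) then yields $x\cdot x^2 = x$, hence $x^2=1$. So $Q_1$ is involutory. Then $Q=Q_1\setminus\{1\}$ with the operation $x\circ y = xy$ for $x\ne y$ and $x\circ x = x$ is an idempotent quasigroup; this is the standard inverse of prolongation and uses only the involutory property. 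It therefore suffices to show that, for an involutory loop $Q_1$ with associated idempotent quasigroup $Q$, the condition of the statement holds in $Q_1$ if and only if the condition of \pref{17} holds in $Q$.

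First I check the pairs $(x,y)$ that involve the new element or a diagonal, where the condition should hold automatically in $Q_1$.
\begin{itemize}
\item If $x=1$, then $z=y$, and alternative (c) holds: $zx=y$ and $zy=y^2=1=x$.
\item If $y=1$, then $z=x$, and alternative (a) holds: $xz=x^2=1=y$ and $yz=x$.
\item If $x=y\ne 1$, then $z=1$, and all three alternatives hold, since $xz=zx=x=y$ and $yz=zy=x$.
\end{itemize}
So only pairs with $x\ne y$, both in $Q$, matter. For these, $z=xy$ lies in $Q$ and is distinct from $x$ and $y$. Every product appearing in (a), (b), (c) is a product of two distinct elements of $Q$, so it is the same in $Q_1$ as in $Q$.

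In $Q$ itself, the pairs with $x=y$ satisfy the condition of \pref{17} trivially by idempotence. Hence the two conditions coincide, and \pref{17} gives the equivalence.

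The only step needing care is the converse direction's derivation $x^2=1$, so that $Q_1$ really arises as a prolongation. I will check it in each alternative separately:
\begin{itemize}
\item Under (a), $x\cdot x^2=x$ forces $x^2=1$.
\item Under (b), the same equation $x\cdot x^2=x$ applies.
\item Under (c), $x^2\cdot x = x$ forces $x^2=1$.
\end{itemize}
Beyond this, the argument is routine case checking.
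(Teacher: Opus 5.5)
Your proof is correct and follows essentially the paper's route. You derive that $Q_1$ is involutory, check the pairs involving $1$ or the diagonal directly, and transfer the remaining pairs of distinct elements of $Q$ to \pref{17}; the paper gets involutivity from $xy=1 \Rightarrow x=y$, you from $x=y \Rightarrow x^2=1$, which amounts to the same thing in a loop. One cosmetic slip: in your first paragraph, alternative (c) gives $x^2\cdot x = x$ rather than $x\cdot x^2 = x$, as your final check already records.
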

\begin{proof} Suppose first that $Q_1$ is a prolongation of a DTS
quasigroup $Q$. If $xy = z$ in $Q_1$ and if none of $x$, $y$ and $z$
is equal to $1$, then the implication holds in $Q_1$ because it
holds in $Q$. It is easy to see that it holds as well when
$1 \in \{x,y,z\}$.
On the other hand if $Q_1$ fulfils the implication for all $x,y \in
Q_1$, then $xy = 1$ implies $x=y$. That means that $Q_1$ is involutory
and can be obtained by a prolongation of an idempotent quasigroup
$Q$. If $xy = z$ in $Q$, then either $x=y=z$ or $xy = z$ in $Q_1$.
Hence the implication holds in $Q$ as well and \pref{17} can be used.
\end{proof}

Arguments used in the proof of \pref{18} apply to DTS loops as well, and so
we have:

\begin{prop}\label{23}
The class of DTS loops is closed under subloops and under
homomorphic images. If both $Q_1$ and $Q_1\times Q_1$ are DTS loops,
then $Q_1$ is a Steiner loop.
\end{prop}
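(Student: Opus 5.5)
The plan is to mirror the proof of \pref{18}, using \pref{22} in place of \pref{17}. For closure under subloops and homomorphic images, note that the condition of \pref{22} is a universal implication. Its hypothesis is an equation, $xy=z$, and its conclusion is a disjunction of conjunctions of equations in $x,y,z$.

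For a subloop $S\le Q_1$, the implication holds for all $x,y\in S$ because it holds in $Q_1$, and $z=xy$ is computed in $S$ as in $Q_1$. For a surjective homomorphism $\varphi\colon Q_1\to L$, let $u,v\in L$ and choose preimages $x,y$. Then $uv=\varphi(xy)$. Whichever alternative (a), (b) or (c) holds for $(x,y,xy)$ in $Q_1$ is mapped by $\varphi$ to the same alternative for $(u,v,uv)$. So $L$ satisfies the condition of \pref{22} and is a DTS loop. Here one should note that a homomorphic image of a loop is a loop, so \pref{22} applies.

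For the product statement, suppose $Q_1$ and $Q_1\times Q_1$ are DTS loops. Write $Q_1$ as the prolongation of a DTS quasigroup $Q$ determined by $\mathcal D$. Assume, for contradiction, that $Q_1$ is not Steiner, so $\mathcal D$ contains a non-Steiner directed triple $\langle x,y,z\rangle$ with $x,y,z\in Q$. Take the elements $(x,y)$ and $(y,z)$ of $Q_1\times Q_1$. Their product $(xy,yz)=(z,x)$ is computed componentwise in $Q$, since no component involves $1$ or a square.

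Since $Q_1\times Q_1$ is a DTS loop, \tref{21}(i) applies to the pair $(a,b)=((x,y),(y,z))$. Hence $ab\cdot b=a$ (the right key law) or $b\cdot ab=a$. Compute exactly as in \pref{18}. We get $ab\cdot b=(zy,xz)=(zy,y)$, and $zy=x$ would force $zy=yz$, making $\{x,y,z\}$ Steiner by \lref{11}. We also get $b\cdot ab=(yz,zx)=(x,zx)$, and $zx=y$ would give $zx=xz$, again making the triple Steiner. Both alternatives fail, contradicting \tref{21}(i), so $\mathcal D$ has only Steiner triples.

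The main point to check is that all entries in these computations stay in $Q$, with no $1$ and no repeated element whose square would be $1$. This holds because $x,y,z$ are distinct elements of $Q$, and each product taken is of two distinct elements of $Q$. So the arithmetic agrees with that of the quasigroup $Q$, and the argument of \pref{18} carries over verbatim. The conclusion is that every triple is Steiner, i.e.\ $Q_1$ is a Steiner loop.
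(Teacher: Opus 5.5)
Your proof is correct and is essentially the paper's. The paper only remarks that the argument of \pref{18} carries over to loops, and you carry it out with \pref{22} in place of \pref{17} and \tref{21} in place of \tref{16}, checking that the product computation never leaves $Q$. One small slip: the alternatives $ab\cdot b=a$ or $b\cdot ab=a$ come from \tref{21}(i) instantiated at $(x,y)=(b,a)$, not at $(a,b)$, which is harmless since (i) holds for all pairs.
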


A loop that satisfies the law $x\cdot xy = x^2 y$ is called \emph{left
alternative}. The mirror law is the \emph{right alternative} law.

A prolongation $Q_1$ of an idempotent quasigroup $Q$ is left alternative
if and only if $Q$ satisfies the left key law $x\cdot xy = y$.
The prolongation is semisymmetric if and only if $Q$ is semisymmetric.

\begin{prop}\label{24}
Let $Q_1$ be a DTS loop. If $Q_1$ is commutative or left alternative or
right alternative
or semisymmetric, then it is a Steiner loop.
\end{prop}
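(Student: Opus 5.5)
The plan is to reduce every case to the underlying idempotent quasigroup and then apply \pref{19} or \lref{11}. Since $Q_1$ is a DTS loop, it is the prolongation of a DTS quasigroup $Q$, with $Q_1 = Q\cup\{1\}$ and $x^2=1$ for all $x\in Q$. A Steiner loop is the prolongation of a Steiner quasigroup. So in each case it suffices to show that $Q$ is a Steiner quasigroup.

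Commutativity is the easiest case. If $Q_1$ is commutative, then so is $Q$, because the products of elements of $Q$ are computed in $Q$. By \lref{11} every triple of the underlying DTS is then a Steiner triple. Equivalently, as remarked in \secref{1}, a commutative DTS quasigroup is a Steiner quasigroup.

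For the left alternative case I would use the observation made just before the statement: the prolongation $Q_1$ is left alternative exactly when $Q$ satisfies the left key law $x\cdot xy=y$. Then \pref{19} applies directly. The right alternative case is the mirror image. Here $Q_1$ is right alternative iff $Q$ satisfies $yx\cdot x=y$, and \pref{19} applies again. To justify the mirror equivalence, note that $yx\cdot x = yx^2 = y$ is what right alternativity gives when $x,y\in Q$. The cases involving $1$, and the case $x=y$, are trivial because $x^2=1$. For the semisymmetric case, $Q_1$ is semisymmetric iff $Q$ is, so $Q$ satisfies $xy\cdot x=y$, and \pref{19} finishes.

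The only step needing care is the claimed equivalence for alternativity. Let $x,y\in Q$. If $x\ne y$, then $xy\in Q$ and $xy\ne x$, so $x\cdot xy$ is computed in $Q$. The law $x\cdot xy=x^2y=y$ in $Q_1$ is therefore exactly the left key law in $Q$. If $x=y$, both sides equal $x$. If $1$ is one of the entries, both sides agree automatically. This is routine, so I expect no real obstacle: the statement is a direct corollary of \pref{19} and \lref{11}.
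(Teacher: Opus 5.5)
Your proof is correct and follows the paper's route. The alternative and semisymmetric cases are the paper's appeal to \pref{19}, using the transfer of these laws between $Q$ and its prolongation $Q_1$; you verify that transfer explicitly, whereas the paper only asserts it. The one difference is the commutative case: the paper uses \tref{21}(i) to show that a commutative DTS loop is semisymmetric and then applies \pref{19}, while you go straight to \lref{11}, and both steps are immediate.
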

\begin{proof} If $Q_1$ is commutative, then it is semisymmetric (and
hence also alternative), by \tref{21}. The rest follows from
\pref{19}.
\end{proof}

\begin{lem}\label{25}
Let $Q_1$ be a DTS loop. Suppose that $x,y \in Q_1$ generate a subgroup,
that $1 \notin \{x,y\}$ and that $x \ne y$. Then the subgroup consists
of 1, $x$, $y$ and $xy$. This takes place if and only if $\{x,y,xy\}$
forms a Steiner triple, and that is true if and only if $xy = yx$.
\end{lem}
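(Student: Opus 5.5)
The plan is to reduce everything to a single statement: $xy=yx$ holds if and only if $\{x,y,xy\}$ is a Steiner triple. This is \lref{15} (or \lref{11}) applied in $Q$. First note that $x,y\in Q=Q_1\setminus\{1\}$, and that $xy\ne 1$ because $x\ne y$ and $Q_1$ is involutory. So $z=xy\in Q$ is computed in the DTS quasigroup $Q$, and since $Q$ is idempotent, $x,y,z$ are pairwise distinct. By \lref{15}, $xy=yx$ if and only if $\{x,y,z\}$ is a subquasigroup of $Q$, i.e.\ a Steiner triple. That settles the last equivalence.

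Next, suppose $\{x,y,z\}$ is a Steiner triple. Then all products among $x,y,z$ stay in the triple, and squares equal $1$ in $Q_1$. So $\{1,x,y,z\}$ is closed under multiplication: it is the Klein four-group, and it is the subloop generated by $x,y$. Hence it is the subgroup generated by $x,y$. This gives the "if" direction of the middle equivalence.

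Conversely, assume $\langle x,y\rangle$ is a group $G$, and aim to show $xy=yx$. Every element of $G$ other than $1$ lies in $Q$ and squares to $1$, so $G$ is a group of exponent $2$, hence abelian. Thus $xy=yx$, so $\{x,y,xy\}$ is a Steiner triple by the first paragraph, and then $G=\{1,x,y,xy\}$ by the second.

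The only delicate point is checking that the squares computed in $G$ agree with those in $Q_1$, which holds because $G$ is a subloop. Beyond that there is no real obstacle. One can also avoid the exponent-$2$ argument by a direct check: using associativity, $x(xy)=y$ and $(yx)x=y$; then \lref{15} in $Q$ forces the key pair equalities in both orders, and hence $xy=yx$.
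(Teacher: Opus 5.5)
Your argument is correct and is essentially the paper's. \lref{11} (or \lref{15}) links $xy=yx$ with the Steiner triple, the Klein four-group gives that direction, and the converse rests on involutory groups being abelian; you pass from $xy=yx$ straight back to \lref{11} instead of going through \pref{23} and \pref{24}, which is a harmless shortcut.

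Your closing aside, however, does not work as written. Key equalities for both $(x,y)$ and $(y,x)$ do not force $xy=yx$: for every unidirectional $\langle a,b,c\rangle\in\mathcal D$, both $(a,c)$ and $(c,a)$ are key pairs, yet $ac\ne ca$. To invoke \lref{15} you would also need $xy\cdot x=y=x\cdot yx$. That follows only from $xy\cdot xy=1$ together with associativity, which is exactly the exponent-$2$ argument the aside was meant to avoid.
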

\begin{proof}
Use \lref{11} if $xy = yx$.
If $\{x,y,xy\}$ forms a Steiner triple, then we clearly get a subgroup.
For the converse it may be assumed that $Q_1$ is a group, by \pref{23}.
The claim follows from \pref{24} since the involutory groups are commutative.
\end{proof}

\begin{prop}\label{26}
Let $Q_1$ be a proper DTS loop. Then it cannot be a (left or right) Bol loop,
or an LC or RC loop, or a Buchsteiner loop or a left or right conjugacy
closed loop.
\end{prop}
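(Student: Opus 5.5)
The plan is to reduce every loop class in the list to one of the identities already shown to force a Steiner loop. These are commutativity, the left or right alternative law, and the semisymmetric law (\pref{24}), or more generally any of the laws $x\cdot xy=y$, $yx\cdot x=y$, $xy\cdot x=y$, $x\cdot yx=y$ (\pref{19}). Throughout we use that a DTS loop $Q_1$ is involutory, so $x^{-1}=x$ for all $x$; this is clear from the prolongation.

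\emph{Bol, LC and RC loops.} A left Bol loop has the left inverse property $x^{-1}\cdot xy=y$. Since $x^{-1}=x$ in $Q_1$, this becomes $x\cdot xy=y$, i.e.\ $Q_1$ is left alternative. Hence $Q_1$ is Steiner by \pref{24}, which contradicts properness. The right Bol case is the mirror image; alternatively apply the same argument to $Q_1\op$, which is again a DTS loop. LC loops are left Bol, being exactly the left Bol loops with an additional condition, and in any case they satisfy the left inverse property. So the same argument applies, and RC loops are handled by mirroring.

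\emph{Buchsteiner loops.} Here I would use the known fact that Buchsteiner loops have the weak inverse property $y\cdot (xy)^{\rho}=x^{\rho}$. If this is not citable, I would derive it directly from the Buchsteiner law $x\backslash(xy\cdot z)=(y\cdot zx)/x$ by specialising variables. With $u^\rho=u$, the weak inverse property reads $y\cdot xy=x$, which is the law $x\cdot yx=y$ after renaming. Then \pref{19}, applied to the underlying DTS quasigroup via the correspondence of laws noted before \tref{21}, makes $Q_1$ Steiner.

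\emph{Left (and by mirror right) conjugacy closed loops.} I expect this case to be the main obstacle, because LCC loops need not have any inverse property. I would first try the known fact that in an LCC loop $L_x^{-1}L_yL_x=L_{x\backslash yx}$ for all $x,y$. I would apply this to a proper directed triple $\langle x,y,z\rangle$ of the underlying DTS, so that $xy=z$, $yz=x$, $xz=y$, with the companion triples supplied by \tref{13}. Evaluating both sides at the points $1,x,y,z$ should give a clash between the value forced by conjugacy closure and the table of products. If that computation becomes unwieldy, I would instead use that in an LCC loop the square $x^2$ acts nucleus-like, together with $x^2=1$, to extract the left alternative law $x\cdot xy=y$ and finish by \pref{24}.
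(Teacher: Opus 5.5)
\textbf{The gap is the LCC case.} Your Bol, LC, RC and Buchsteiner cases go through, but the LCC case, which you yourself call the main obstacle, is not proved. ``Evaluating at $1,x,y,z$ should give a clash'' is an expectation, not an argument. The fallback cannot simply be quoted either: LCC loops need not be left alternative, so any ``nucleus-like'' behaviour of $x^2$ would itself have to be established.

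Your first route does close the case, in one line. For a unidirectional $\langle x,y,z\rangle\in\mathcal D$ you have $xy=z$ and $yz=x$. Evaluating $L_x^{-1}L_yL_x=L_{x\backslash yx}$ at $y$ gives $1=x\backslash x=x\backslash(y\cdot xy)=(x\backslash yx)\cdot y$. In an involutory loop $uv=1$ forces $v=u$, hence $x\backslash yx=y$, i.e.\ $yx=xy$. Then \lref{11} makes $\{x,y,z\}$ a Steiner triple, a contradiction. Evaluating at $z$ works equally well.

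This is essentially the paper's argument. The paper uses the equivalent form $((xy)/x)z=x(y(x\backslash z))$ at $z=1$ to get $xy=(x\cdot yx)x$, and applies it to a triple $\langle y,x,z\rangle$. Your second route can also be made rigorous in the involutory case. The set $\{L_u\}$ is closed under conjugation by $\mathrm{LMlt}(Q_1)$, and every $L_u^2$ fixes $1$. Hence $L_x^2$ fixes $g(1)$ for every $g\in\mathrm{LMlt}(Q_1)$, so $L_x^2=\mathrm{id}$, which is the left key law, and \pref{24} finishes.

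\textbf{Two smaller remarks.} First, your claim that LC loops are left Bol is false. Every Steiner loop satisfies $xx\cdot yz=yz=(x\cdot xy)z$, so it is LC. But a left Bol Steiner loop is commutative, hence Moufang, and a commutative Moufang loop of exponent $2$ is an elementary abelian $2$-group. So, for example, the Steiner loop of order $10$ is LC but not left Bol. Your alternative justification via the left inverse property is what actually carries that case; or, as in the paper, put $z=1$ in the LC law to get left alternativity.

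Second, for Buchsteiner loops you take a different route from the paper. The paper sets $z=x$ to get $x\backslash(xy\cdot x)=y/x$, evaluates on a triple $\langle z,x,y\rangle$, and obtains $zx=xz$. You instead derive a global semisymmetric law and invoke \pref{24}. That works and needs no citation of the weak inverse property: putting $z=xy$ in $x\backslash(xy\cdot z)=(y\cdot zx)/x$ gives $x=(y\cdot(xy\cdot x))/x$, so $y\cdot(xy\cdot x)=1$ and $xy\cdot x=y$ for all $x,y$. Your version avoids choosing a triple and shows slightly more, namely that every involutory Buchsteiner loop is semisymmetric.
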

\begin{proof} Left Bol loops and LC loops are left alternative. Right Bol
loops and RC loops are right alternative. By \lref{24} we hence need only
to prove that $Q_1$ is commutative if it is a Buchsteiner loop or, say,
a left conjugacy closed (LCC) loop.

LCC loops fulfil the identity $((xy)/x)z = x(y(x\backslash z))$.
Setting $z = 1$ we get $xy = (x\cdot yx) x$ since $Q_1$ is involutory.
Assume that the latter identity holds. Consider the associated
LDTS $\mathcal D$ and assume that $\langle y,x,z \rangle\in \mathcal D$.
Then $(x\cdot yx) x =
xz\cdot x = yx =z =xy$. That makes $\{x,y,z\}$ a Steiner triple,
by \lref{11}, and we see that $Q_1$ is commutative, as required.

In every involutory loop the Buchsteiner law
$x \backslash (xy \cdot z) = (y \cdot zx)/x$
yields $x\backslash (xy \cdot x) = y/x$.
Assume $\langle z,x,y \rangle \in \mathcal D$. Then $z = y/x$
and $xy \cdot x = zx$. Therefore $zx = xz$ and so we get
the commutativity again.
\end{proof}

Proper DTS loops thus never belong to one of the standardly studied
equational classes of loops.

Let $Q_1$ be a loop. The \emph{left nucleus} $N_\lambda$ is formed
by elements $a\in Q_1$  with $a(xy) = (ax)y$ for all
$x,y \in Q_1$.
By shifting $a$ to the right we get the \emph{middle nucleus} $N_\mu$
and the right nucleus $N_\rho$. The \emph{centre} $Z(Q_1)$
consists of all $a \in N_\lambda \cap N_\rho \cap N_\mu$
with $ax = xa$ for every $x \in Q_1$.

Set $C(Q_1) = \{a \in Q_1;$ $ax = xa$ for all $x \in Q_1\}$. By \lref{25},
if $Q_1$ is a DTS loop, then its element $a \ne 1$ belongs to $C(Q_1)$
if and only if $\{a,x,ax\}$ is a Steiner triple for any $x \in Q_1
\setminus \{1,a\}$. Note that $C(Q_1)$ does not have to be a
subloop---below is a counterexample of the smallest order.
For simplicity, we omit commas from the triples.

\begin{ex}\label{counterexample}
Let $X=\{\mathtt{2,3,4,5,6,7,8,9,A,B,C,D,E}\}$ and let $Q_1$ be the DTS loop
determined by the triples
$\str{234}$, $\str{256}$, $\str{278}$, $\str{29A}$, $\str{2BC}$, $\str{2DE}$,
$\str{357}$, $\str{36C}$, $\str{38A}$, $\str{39E}$, $\str{3BD}$,
$\dtr{458}$, $\dtr{469}$, $\dtr{74E}$, $\dtr{76B}$, $\dtr{85D}$, $\dtr{864}$,
$\dtr{954}$, $\dtr{96D}$, $\dtr{97C}$, $\dtr{98B}$, $\dtr{A4C}$, $\dtr{A5B}$,
$\dtr{A6E}$, $\dtr{A7D}$, $\dtr{B47}$, $\dtr{B59}$, $\dtr{B6A}$, $\dtr{B8E}$,
$\dtr{C4D}$, $\dtr{C5E}$, $\dtr{C7A}$, $\dtr{C89}$, $\dtr{D4A}$, $\dtr{D5C}$,
$\dtr{D68}$, $\dtr{D79}$, $\dtr{E4B}$, $\dtr{E5A}$, $\dtr{E67}$, $\dtr{E8C}$.
Then $C(Q_1) = \{1,\mathtt 2,\mathtt 3\}$, but $\mathtt 2 \cdot \mathtt 3 = \mathtt 4 \not\in C(Q_1)$.
\end{ex}

\begin{lem} \label{27}
Let $Q_1$ be a DTS loop. Then $N_\lambda \cup N_\rho
\cup N_\mu \subseteq C(Q_1)$.
\end{lem}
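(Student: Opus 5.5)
We will show that every $a\in N_\lambda\cup N_\mu\cup N_\rho$ with $a\ne 1$ commutes with every $x\in Q_1\setminus\{1,a\}$. By \lref{25} and \lref{11} this amounts to showing that $\{a,x,ax\}$ is a Steiner triple of the associated LDTS $\mathcal D$. Passing to $Q_1\op$ interchanges $N_\lambda$ and $N_\rho$, and $Q_1\op$ is again a DTS loop, induced by $\mathcal D\op$. So it suffices to treat $N_\lambda$ and $N_\mu$. Throughout we use that $Q_1$ is involutory: $x^2=1$, $x\bs y = xy$ and $y/x = yx$.

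\emph{Left nucleus.} Let $a\in N_\lambda$. The substitutions $x=a$ and $y=x$ in $a(xy)=(ax)y$ give the following identities for all $x$:
\[ a\cdot ax = a^2x = x, \qquad ax\cdot x = a\cdot x^2 = a, \qquad a(xa)=(ax)a. \]
Translated into $\mathcal D$, the first identity says that $L_a$ is an involution. The second says $(ax)\cdot x = a$, which is a right-key type identity for the pair $(x,a)$. By \lref{15} applied to the pair $(a,x)$, the equality $a\cdot ax = x$ forces $xa\cdot a = x$ as well.

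We then split according to the position of $a$ in the directed triple of $\mathcal D$ that carries the edge $(a,x)$: it is $\langle a,x,z\rangle$, $\langle a,z,x\rangle$ or $\langle z,a,x\rangle$. In each case we use \tref{13} to produce the three companion triples $\langle z',\cdot,\cdot\rangle$, and so on, and we read off the products $xa$, $ax$ and $x\cdot ax$. The aim is to show that the identities above are consistent only when $xa=ax$. For example, in the case $\langle z,a,x\rangle$ we have $ax=z$ and $zx=a$, and $a\cdot az = x$ must be compared with the product $az$ coming from the companion triple containing $(a,z)$. In the remaining cases we combine the key identities with the flexibility-type identity $a(xa)=(ax)a$, or with a further instance $y=xa$ of left associativity, until two of the alternatives (a)--(c) of \pref{22} hold simultaneously for one pair. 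By the argument in the proof of \pref{17}, that forces a Steiner triple.

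\emph{Middle nucleus.} Let $a\in N_\mu$, so $(xa)y = x(ay)$. Taking $y=a$ gives $xa\cdot a = x$, and taking $x=a$ gives $a\cdot ay = y$. Hence both key laws hold on pairs involving $a$. The same case analysis as above, now simpler, should again force $ax=xa$.

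The main obstacle is the left-nucleus case analysis. The two key identities alone do not seem sufficient: a degree count shows that $a$ can avoid the middle position of all its directed triples, so the left key law at $a$ is not contradictory by itself. One therefore has to feed in genuinely non-trivial instances of $a(xy)=(ax)y$ with $y\notin\{a,x\}$. My first attempt would be $y=xa$, together with \tref{13} to track the companion triples, aiming to show that $a$ can never occupy the first or last position of a non-Steiner triple.
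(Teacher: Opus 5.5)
Your proposal does not yet contain a proof in either case. The only instances of the nuclear identities you actually evaluate are those with $x=a$ or $y=a$. These give just the key identities $a\cdot ax=x$ and $xa\cdot a=x$, which are equivalent to each other by \lref{15}, and, for $N_\lambda$, also $ax\cdot x=a$. As you note yourself, the key laws at $a$ are compatible with $a$ lying in non-Steiner triples (always in first or last position), so they cannot force $ax=xa$. The case analysis over the position of $a$ in $\mathcal D$ is announced but never carried out. For $N_\mu$ the situation is worse: you extract exactly the same key laws and then assert that a ``simpler'' version of the unexecuted analysis ``should'' work, which by your own remark cannot succeed from those identities alone. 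Your tentative instance $y=xa$ for $N_\lambda$ puts $(ax)(xa)$ on the right, which does not simplify.

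The missing idea is to choose the instance so that one side becomes a square, which equals $1$ in the involutory loop $Q_1$, and then to use that $uv=1$ forces $u=v$. For $a\in N_\mu$, take $y=ax$. Then $(xa)(ax)=x(a\cdot ax)=x\cdot x=1$, hence $xa=ax$ immediately. For $a\in N_\lambda$, apply the identity to the pair $(x,ax)$: $a(x\cdot ax)=(ax)(ax)=1$, so $x\cdot ax=a$. Together with your identity $ax\cdot x=a$, this shows that $x$ and $ax$ commute. By \lref{25} (or \lref{11}), $\{x,ax,a\}$ is then a Steiner triple and $\{1,a,x,ax\}$ is a commutative subgroup; in particular $ax=xa$. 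Equivalently, as in the paper, put $y=ax$; then $a\cdot xy=1=a\cdot yx$, hence $xy=yx$. With these two lines no analysis of the triples of $\mathcal D$ is needed, and $N_\rho$ follows by your passage to $Q_1\op$.
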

\begin{proof} Suppose that $a,x,y \in Q_1$ are such that $ax = y$.
Let $a$ be first an element of $N_\lambda$. Then
$ay = aa\cdot x = x$ and $a\cdot xy = a(x\cdot ax) = ax \cdot ax =1
= ay \cdot ay = a(y \cdot ay) = a\cdot yx$. Thus $xy = yx$ and
$\{1,a,x,y\}$ is a commutative subgroup of $Q_1$, by \lref{25}.
Hence $ax = xa$.

Let $a$ be now an element of $N_\mu$. Then $ay = aa \cdot x = x$ and
$yy =  1 = xx = x\cdot ay = xa \cdot y$. Thus $xa = y = ax$.
\end{proof}

While the existence spectrum of DTS loops is known, there seem to be
no results that would specify possible sizes of nuclei.

\section {Directed triples and surface triangulations}\label{3}

By a \emph{combinatorial triangulated 2-pseudomanifold}
(shortly \emph{triangulated pseudomanifold}) we shall understand
a finite family $\mathcal F$ of \emph{faces} such that every
face is a three-element set $\{x,y,z\}$ and there exist
unique $x'\ne x$, $y' \ne y$ and $z'\ne z$ with $\{x',y,z\},
\{x,y',z\}, \{x,y,z'\} \in \mathcal F$. In other words,
every edge of $\mathcal F$ is incident to exactly two faces.
Each face determines three edges and three points.
The edges and points yield the \emph{graph} of $\mathcal F$.
The pseudomanifold is said to be \emph{connected} if the
graph is connected. The pseudomanifold is \emph{strongly
connected} if for any two points $x$ and $y$ there exists
a sequences of faces $F_0,\dots,F_k$ such that $F_{i-1}$
and $F_i$ share an edge, $1\le i \le k$, $x$ is incident
to $F_0$ and $y$ is incident to $F_k$. Note that many authors
require (triangulated) pseudomanifolds to be strongly connected.

The main notion we need is that of the triangulated
pseudomanifold as defined above. A more general notion
of \emph{combinatorial 2-pseudomanifolds} (shortly,
\emph{pseudomanifolds}) is defined similarly, but the faces
can be $k$-gons, $k \ge 3$. Taken formally, the \emph{face}
is then a pair $\{(y_1,\dots,y_k),(y_k,\dots,y_1)\}$, where
$y_1,\dots,y_k$ are pairwise distinct points
and $(y_1,\dots,y_k)$ is regarded as a cyclic sequence.
By choosing one element
of the pair we choose an orientation of the face. A combinatorial
2-pseudomanifold is \emph{orientable} if the orientation can
be fixed in such a way that two different faces that
share an edge induce upon the edge opposite orientations.
If such a coherent orientation is given, we speak about an
\emph{oriented pseudomanifold}. An oriented pseudomanifold
can be considered as a family of \emph{oriented faces}
$(y_1,\dots,y_k)$. Orientable pseudomanifolds will be
called here \emph{(combinatorial) pseudosurfaces}.

Let $\mathcal D$ be a finite DTS.
Elements $\langle x,y,z \rangle \in \mathcal D$ that
do not yield a Steiner triple will be called \emph{unidirectional}.
Denote by $\mathcal F$ the set of all $\{x,y,z\}$,
where $\langle x,y,z \rangle$ runs through all unidirectional
triples of $\mathcal D$. Consider now $\mathcal F$ as a set
of faces. Each edge $\{x,y\}$ is incident to two faces, and
so we get a pseudomanifold. In general, the pseudomanifold
does not have to be orientable.

Suppose now that $\mathcal D$ is a finite LDTS. Orient
$\{x,y,z\}\in \mathcal F$ as $(x,y,z)$ if $\langle x,y,z
\rangle \in \mathcal D$. It follows from \tref{13} that
this defines a coherent orientation. Hence $\mathcal F$
is a pseudosurface. We shall call it the pseudosurface
of $\mathcal D$ (or of $Q$ if $Q$ is the DTS quasigroup
that determines $\mathcal D$).

Consider $\langle y_0,x,y_1\rangle \in \mathcal D$. There
exist $k \ge 2$ and points
$y_0,y_1,y_2,\dots,y_k$ that are pairwise distinct such that
$\langle y_1,x,y_2\rangle, \dots, \langle y_k, x, y_0
\rangle \in \mathcal D$. Call $(y_0,y_1,\dots,y_k)$
an (oriented) \emph{residual face}. The triangular
faces $\{y_0,x,y_1\},\dots,\{y_k,x,y_0\}$ form its
\emph{cap}. The oriented residual face $(y_0,\dots,y_k)$
is said to be
\emph{singular} if $(y_k,\dots,y_0)$ is an oriented residual
face as well. To see how singular residual faces relate
to flexibility we need the following lemma. It analyzes
the situation when two residual faces share an edge.

\begin{lem}\label{31}
Suppose that $\mathcal D$ contains $\langle y_0,x_1,y_1 \rangle$,
$\langle y_1, x_2, y_0\rangle$, $\langle y_1, x_1, y_2 \rangle$
and $\langle y_2',x_2,y_1\rangle$. Then $y_2' = y_2$ if and only
if $y_1 \cdot y_0y_1 = y_1y_0 \cdot y_1$.
\end{lem}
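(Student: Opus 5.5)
The statement should follow by translating each of the four directed triples into the products it determines, via the defining rule of the operation ($xy=z$, $yz=x$, $xz=y$ whenever $\langle x,y,z\rangle\in\mathcal D$). No appeal to \tref{13} or \tref{16} should be needed beyond this defining rule.

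First, from $\langle y_0,x_1,y_1\rangle\in\mathcal D$ we read off $y_0y_1 = x_1$. Then from $\langle y_1,x_1,y_2\rangle\in\mathcal D$ we get $y_1x_1 = y_2$. Combining these gives
\[ y_1\cdot y_0y_1 = y_1x_1 = y_2 .\]

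Second, from $\langle y_1,x_2,y_0\rangle\in\mathcal D$ we read off $y_1y_0 = x_2$. Then from $\langle y_2',x_2,y_1\rangle\in\mathcal D$ the middle-to-last product is $x_2y_1 = y_2'$. Combining these gives
\[ y_1y_0\cdot y_1 = x_2y_1 = y_2' .\]

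Hence $y_1\cdot y_0y_1 = y_1y_0\cdot y_1$ holds precisely when $y_2=y_2'$, which is the claim. The only point needing care is to use the correct product for each position in a directed triple, namely first$\cdot$last $=$ middle and middle$\cdot$last $=$ first. I do not expect any real obstacle; the whole argument is this bookkeeping.
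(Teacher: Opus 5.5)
Your proof is correct and is the same computation as the paper's: reading off $x_1=y_0y_1$, $x_2=y_1y_0$, $y_2=y_1x_1$ and $y_2'=x_2y_1$ from the four triples gives $y_2=y_1\cdot y_0y_1$ and $y_2'=y_1y_0\cdot y_1$, which is exactly the claim. Your closing remark lists only two of the positional rules, but the step $y_1x_1=y_2$ also needs the third rule (first$\cdot$middle $=$ last), and you applied it correctly.
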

\begin{proof} By our assumptions $x_1 = y_0y_1$, $x_2 = y_1y_0$,
$y_2 = y_1x_1 = y_1 \cdot y_0y_1$ and $y_2' = x_2y_1 = y_1y_0
\cdot y_1$.
\end{proof}

\begin{cor}\label{32}
A finite DTS quasigroup $Q$ is flexible if and only if all
residual faces of $Q$ are singular.
\end{cor}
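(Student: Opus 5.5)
The plan is to combine \lref{31} with \lref{110}, arguing edge by edge along the residual faces.

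First fix a residual face $(y_0,\dots,y_k)$ around a point $x$. Each consecutive pair $(y_i,y_{i+1})$, indices mod $k+1$, then satisfies $\langle y_i,x,y_{i+1}\rangle\in\mathcal D$. By \tref{13} there is a unique $x_2$ with $\langle y_{i+1},x_2,y_i\rangle\in\mathcal D$, namely $x_2=y_{i+1}y_i$. So the directed edge $(y_{i+1},y_i)$ lies on the residual face around $x_2$, and that face continues past $y_i$ to some $y'$ with $\langle y',x_2,y_{i+1}\rangle\in\mathcal D$. Note that $x_2\ne x$, since the triple is unidirectional and therefore not Steiner.

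Now apply \lref{31} with $y_0,y_1$ replaced by $y_i,y_{i+1}$ and $y_2$ by $y_{i+2}$. It gives $y'=y_{i+2}$ if and only if $y_{i+1}\cdot y_iy_{i+1}=y_{i+1}y_i\cdot y_{i+1}$. This is flexibility for the pair $(y_{i+1},y_i)$.

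The key step is to show that $(y_k,\dots,y_0)$ is a residual face exactly when the face around $x_2$ reverses the face around $x$ along every edge. Suppose $(y_k,\dots,y_0)$ is a residual face. It must be the face around $x_2=y_1y_0$, because its triple through the directed edge $(y_1,y_0)$ is $\langle y_1,x_2,y_0\rangle$. Its successor of $y_1$ in the reversed direction is $y_2$, so $y'=y_2$. Running the same argument at every $i$ shows that flexibility holds on all pairs $(y_{i+1},y_i)$. Conversely, if flexibility holds on all these pairs, then $y'=y_{i+2}$ for every $i$. Walking around the face at $x_2$ then reproduces $y_k,\dots,y_0$ in reverse, so the reversed face is a residual face and $(y_0,\dots,y_k)$ is singular.

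It remains to match ``flexibility on all pairs of the form $(y_{i+1},y_i)$ over all residual faces'' with full flexibility. The ``only if'' direction is immediate. For the ``if'' direction, \lref{110} reduces flexibility to the pairs $(x,xy)$ with $\langle x,xy,y\rangle\in\mathcal D$. Such a pair is of the form (point, point) with $\langle y,\,y\cdot?\,,x\rangle$-type triples. More directly, each unidirectional triple $\langle a,b,c\rangle$ makes $(a,c)$ consecutive on the residual face around $b$. So every ordered pair $(u,v)$ with $u\ne v$ not in a Steiner triple appears as some $(y_{i+1},y_i)$, namely on the face around $vu$. Pairs lying in Steiner triples satisfy flexibility trivially. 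Hence singularity of all residual faces gives flexibility of every pair directly, without even needing \lref{110}.

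I expect the main obstacle to be the bookkeeping of orientations and indices. One has to check that the reversed face is really the residual face around $x_2$, and that the pair appearing in \lref{31} has the correct order. Finiteness is used only so that residual faces are finite cycles.
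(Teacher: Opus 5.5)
Your per-face analysis is sound. From \tref{13} and \lref{31} you correctly get that a residual face $(y_0,\dots,y_k)$ is singular if and only if $y_{i+1}\cdot y_iy_{i+1}=y_{i+1}y_i\cdot y_{i+1}$ for all $i$ (indices mod $k+1$), and this settles the ``only if'' direction.

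The gap is in your last paragraph. There you claim that every ordered pair $(u,v)$ of distinct points outside Steiner triples occurs as some $(y_{i+1},y_i)$, and you conclude that \lref{110} is not needed. This claim is false. To have $(u,v)=(y_{i+1},y_i)$ you need $\langle v,m,u\rangle\in\mathcal D$, i.e.\ the directed edge $(v,u)$ must join the first and the last point of its triple. For a unidirectional $\langle a,b,c\rangle\in\mathcal D$, \tref{13} places $(b,a)$ in $\langle ba,b,a\rangle$ and $(c,b)$ in $\langle c,b,cb\rangle$. Since each directed edge lies in exactly one triple, the pairs $(a,b)$ and $(b,c)$ never occur as $(y_{i+1},y_i)$. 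Singularity of all residual faces therefore gives flexibility directly only for one third of the non-Steiner pairs, namely the pairs $(x,y)$ with $\langle x,xy,y\rangle\in\mathcal D$. You also misquote \lref{110}: it reduces to these pairs $(x,y)$, not to $(x,xy)$.

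The remaining pairs $(x,xy)$ and $(xy,y)$ are exactly what \lref{110} handles. Flexibility at $(xy,y)$ holds unconditionally. Flexibility at $(x,xy)$ follows from flexibility at $(x,y)$ through the triples $\langle y,yx,x\rangle$ and $\langle x,yx,x\cdot yx\rangle$ supplied by \tref{13}.

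So the fix is to keep \lref{110}:
\begin{enumerate}
\item[(1)] If $\langle x,xy,y\rangle\in\mathcal D$ is unidirectional, then \tref{13} gives the unidirectional triple $\langle y,yx,x\rangle$.
\item[(2)] Hence $(x,y)=(y_{i+1},y_i)$ on the residual face around $yx$, and your per-face equivalence yields $x\cdot yx=xy\cdot x$.
\item[(3)] Steiner triples are trivially flexible.
\item[(4)] \lref{110} then gives flexibility of $Q$.
\end{enumerate}
With this correction your argument is the paper's proof (combine \lref{31} with \lref{110}), with the bookkeeping written out.
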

\begin{proof} Combine \lref{31} with \lref{110}.
\end{proof}

Denote by $O_k$ a $k$-gonal bipyramid, i.e.\ a graph of $k+2$ vertices with
a cycle of length $k\ge 3$, in which the remaining two vertices
are connected to the elements of the cycle (the graph
contains $3k$ edges). \cref{32} immediately yields:

\begin{thm}\label{33} A flexible DTS quasigroup of order $n$
exists if and only if the complete graph $K_n$ can be decomposed
to triangles and graphs $O_k$, $k \ge 3$.
\end{thm}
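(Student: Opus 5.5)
Both directions go through \cref{32}, by reading the structure of a flexible LDTS geometrically. The key observation is that a singular residual face $(y_0,\dots,y_k)$ at $x$, together with its reverse $(y_k,\dots,y_0)$ at some $x'$, gives two caps glued along the $k$-cycle. The edges of this configuration are the cycle edges $\{y_i,y_{i+1}\}$, the spokes $\{x,y_i\}$ and the spokes $\{x',y_i\}$, which form exactly a copy of $O_k$ on the vertex set $\{x,x',y_0,\dots,y_k\}$. Here $k+1\ge 3$ is the cycle length, so the index should be shifted accordingly.

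\emph{From a flexible DTS quasigroup to a decomposition.} Let $Q$ be a flexible DTS quasigroup of order $n$ with LDTS $\mathcal D$. Each Steiner triple contributes a triangle of $K_n$. Each unordered pair $\{a,b\}$ that is not in a Steiner triple lies in exactly two faces of the pseudosurface, namely the ones carrying $(a,b)$ and $(b,a)$. Consider the unidirectional triples $\langle u,v,w\rangle$, and let the \emph{middle edges} be $\{u,w\}$ and the \emph{side edges} be $\{u,v\},\{v,w\}$.

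I will show three facts.
\begin{enumerate}
\item[(1)] Every non-Steiner edge $\{a,b\}$ occurs as a middle edge in exactly two triples, namely the ones with apex $a\bs b$-type determined by $ab$ and $ba$. Concretely, the triple carrying $(a,b)$ as its first-to-last edge is $\langle a,\cdot,b\rangle$, and the one carrying $(b,a)$ is $\langle b,\cdot,a\rangle$.
\item[(2)] These two triples lie in the caps of a residual face and its reverse.
\item[(3)] Every non-Steiner edge appears once as a middle edge in one of the constructed bipyramids.
\end{enumerate}
Grouping residual faces into reverse pairs, which \cref{32} permits, each pair yields one $O_k$. The edge sets of distinct bipyramids and the Steiner triangles are disjoint. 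Each non-Steiner edge is covered exactly once: either as a cycle edge, i.e.\ a middle edge, or as a spoke, and the bookkeeping must show it is not both. Counting should confirm this, since each edge lies in two faces, and a cycle edge uses both of its faces in the two caps, while a spoke uses its two faces within a single cap.

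\emph{From a decomposition to a flexible DTS quasigroup.} Given a decomposition, put a Steiner triple on each triangle. For each $O_k$ with cycle $(y_1,\dots,y_k)$ and apexes $x,x'$, include
\[
\langle y_i,x,y_{i+1}\rangle \quad\text{and}\quad \langle y_{i+1},x',y_i\rangle \qquad \text{for all } i \text{ (indices mod } k).
\]
This covers each arc of $K_n$ exactly once. The arcs between $y_i$ and $y_{i+1}$ come one from each apex. The arcs $(y_i,x)$ and $(x,y_{i+1})$ come from the first family, with $x$ as middle, and similarly for $x'$.

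Verify the criterion of \lref{111}: for $\langle y_i,x,y_{i+1}\rangle$ the triple $\langle y_{i+1},x',y_i\rangle$ is present, so the condition of the lemma holds and $\mathcal D$ is an LDTS. The residual faces at $x$ and at $x'$ are the cycle and its reverse, so all residual faces are singular, and \cref{32} gives flexibility.

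The main obstacle is the first direction: showing that the gluing of a residual face with its reverse really uses disjoint edges. The issue is whether the apexes $x\neq x'$ differ, and whether the bipyramids from different pairs are edge-disjoint. I would handle this by showing that each unidirectional triple belongs to exactly one cap, namely the one at its middle vertex. Triangles then partition into caps, and each edge is assigned to a unique bipyramid according to whether it is a middle or a side edge of its two triangles. For $x= x'$ one derives a contradiction with the fact that $(a,b)$ and $(b,a)$ lie in distinct triples.
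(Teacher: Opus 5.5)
Your route is the paper's: there \tref{33} is stated as an immediate consequence of \cref{32}. Each singular residual face is paired with its reverse to form a bipyramid, and conversely each bipyramid is oriented as you do. The plan is sound, but two steps are wrong as written.

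First, claims (1) and (3) are false: not every non-Steiner edge is a ``middle edge''. If $\langle y_0,x,y_1\rangle\in\mathcal D$, then $\{x,y_0\}$ is non-Steiner. Its two faces come from $\langle y_0,x,y_1\rangle$ and $\langle y_k,x,y_0\rangle$, and it is a side edge in both. What \tref{13} actually gives is that a non-Steiner edge $\{a,b\}$ has the same role in both of its faces. Either it is opposite the middle vertex in both, namely the faces $\langle a,ab,b\rangle$ and $\langle b,ba,a\rangle$, so it is a residual-face edge shared with the reverse face. Or it is a side edge in both with a common middle vertex, so both faces lie in one cap. With this, your final paragraph closes the argument:
\begin{itemize}
\item faces partition into caps;
\item reversal is a fixed-point-free involution on residual faces;
\item every edge of a bipyramid has both its faces in that bipyramid (as you observe), so no edge lies in two bipyramids;
\item Steiner edges lie in no face.
\end{itemize}
Also, $x\ne x'$ holds because otherwise $\langle y_0,x,y_1\rangle$ and its opposite $\langle y_1,x,y_0\rangle$ would both be in $\mathcal D$, making $\{y_0,x,y_1\}$ a Steiner triple. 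The fact that $(y_0,y_1)$ and $(y_1,y_0)$ lie in different triples is by itself no contradiction.

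Second, in the converse you give the wrong witness for \lref{111}. That lemma requires, for $\langle u,v,w\rangle\in\mathcal D$, a triple $\langle z',v,u\rangle$ with the \emph{same} middle $v$. The correct witnesses are:
\begin{itemize}
\item for $\langle y_i,x,y_{i+1}\rangle$, the triple $\langle y_{i-1},x,y_i\rangle$;
\item for $\langle y_{i+1},x',y_i\rangle$, the triple $\langle y_{i+2},x',y_{i+1}\rangle$;
\item for one half of a Steiner triple, the opposite half.
\end{itemize}
The triple $\langle y_{i+1},x',y_i\rangle$ you cite witnesses a different condition of \tref{13}, the one producing $y'$. The correct witnesses are present, so the converse goes through once this is fixed. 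Your identification of the residual faces at $x$ and $x'$ as the cycle and its reverse, hence singular, is fine.
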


Note that the number of nonisomorphic flexible quasigroups
of order $n$ can be much bigger than the number of nonisomorphic
decompositions of $K_n$, as each $O_k$ can be oriented in
two ways (if $k = 4$ then there are, in addition,
three ways how to choose the non-oriented residual face).

The existence spectrum of odd order flexible DTS quasigroups was
determined in \cite[Theorem~4.4]{ldts}. The even case is being investigated.

When we put aside the singular residual faces we get
a set of oriented faces that yields an oriented pseudosurface.
We call it the \emph{residual pseudosurface}. It is
obtained from the pseudosurface of $Q$ by cutting away the caps.

The proof that there are no DTS quasigroups of order 10
\cite[Theorem~3.3]{ldts} is based upon showing that the parameters
of a potential residual pseudosurface induce a surface with parameters
that would violate the parity of the Euler characteristic.

The notion of the strong connectivity can be used to partition
the pseudosurface of a DTS quasigroup $Q$ into \emph{components}.
Each component
possesses a genus, and the list of genera can be considered
as an invariant of $Q$. The components induced by
a singular residual face are called \emph{flexible}.
Their graph is isomorphic to $O_k$ for some $k \ge 3$.

Note however that a component may
still be a proper pseudosurface, i.e.~it does
not have to be a (combinatorial) surface
(a formal definition of a surface can be found below).
This fact seems
to make the geometrical approach a less potent tool
than might be expected when
proving the existence or non-existence of
DTS quasigroups of orders greater than 10.

Nevertheless, the gained geometrical insight naturally leads
to a construction that uses latin bitrades
to diminish the number of Steiner triples in a
DTS quasigroup (in particular, to build a proper DTS quasigroup
from a Steiner quasigroup).

By a \emph{latin bitrade $T$} we shall understand a pair
$T= (L,R)$ where $L$ and $R$ are two disjoint sets
consisting of ordered triples such that if $1\le i < j \le 3$,
and $a=(a_1,a_2,a_3) \in L$, then $\{a_i,a_j\}$ determines
the triple $a$ uniquely, $a_i \ne a_j$,
and there exists $b=(b_1,b_2,b_3)\in R$
with $(a_i,a_j) = (b_i,b_j)$. The meaning of the \emph{mates}
$L$ and $R$ is interchangeable, and thus for $R$ there
apply symmetric conditions.

Our definition of latin bitrades is tailored to present needs. Instead
of requiring that $a_i \ne a_j$ and that $\{a_i,a_j\}$
determines the triple $a$ it is usual to require only
that $(a_i,a_j)$ determines $a$. Another, a more restrictive
definition, includes a condition that $a_i \ne a'_j$
for all $(a_1',a_2',a_3')\in L$, $1\le i < j \le 3$.
These variations have no structural impact and can be
solved by renaming of elements.

Note that by considering the family of all $\{a_1,a_2,a_3\}$
and $\{b_1,b_2,b_3\}$, where $(a_1,a_2,a_3) \in L$ and
$(b_1,b_2,b_3) \in R$, we get a pseudomanifold. By
choosing reverse orientations for elements of $L$ and $R$
we see that the pseudomanifold is orientable (it is a
pseudosurface).

\begin{prop}\label{34}
Let $(L,R)$ be a Latin bitrade and let $\mathcal D$ be an LDTS
such that $\{a_1,a_2,a_3\}$ is a Steiner triple in $\mathcal D$ for every
$(a_1,a_2,a_3) \in L$. Change $\mathcal D$ into $\mathcal D'$
in such a way that these Steiner
triples are replaced by directed triples $\langle a_1,a_2,a_3\rangle$
and $\langle b_3,b_2,b_1\rangle$, where $(a_1,a_2,a_3)\in L$ and
$(b_1,b_2,b_3) \in R$. Then $\mathcal D'$ is an LDTS as well.
\end{prop}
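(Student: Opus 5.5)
The plan is to verify two things in turn: first that $\mathcal D'$ is again a directed triple system, i.e.\ that every directed edge is covered exactly once, and then that $\mathcal D'$ satisfies the criterion of \tref{13}. Throughout, for $a=(a_1,a_2,a_3)\in L$ and $1\le i<j\le 3$, write $b^{(a,i,j)}$ for the triple of $R$ with $(b_i,b_j)=(a_i,a_j)$, and symmetrically for triples of $R$.

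\textbf{Edge covering.} The Steiner triples $\{a_1,a_2,a_3\}$, $a\in L$, are pairwise distinct, since $\{a_i,a_j\}$ determines $a$. Removing them uncovers exactly the directed edges $(a_i,a_j)$ and $(a_j,a_i)$ with $a\in L$ and $i<j$. The new triple $\langle a_1,a_2,a_3\rangle$ covers the edges $(a_i,a_j)$, $i<j$. The new triple $\langle b_3,b_2,b_1\rangle$ covers the edges $(b_j,b_i)$, $i<j$.
\begin{enumerate}
\item[(1)] Each uncovered edge $(a_j,a_i)$ is covered by the new triple coming from $b=b^{(a,i,j)}$.
\item[(2)] Conversely, each edge $(b_j,b_i)$ of a new $R$-triple lies in a removed Steiner triple, by the symmetric condition for $R$: there is $a\in L$ with $(a_i,a_j)=(b_i,b_j)$.
\item[(3)] The edges $(a_i,a_j)$ with $i<j$ and the edges $(b_j,b_i)$ with $i<j$ come from different new triples. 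An edge could be covered twice only if two distinct elements of $L$ (or of $R$) shared a pair $\{a_i,a_j\}$, which the definition of a bitrade forbids.
\end{enumerate}
Hence the new triples cover precisely the uncovered edges, each exactly once. As a sanity check, $|L|=|R|$, because $a\mapsto b^{(a,1,2)}$ is a bijection; so $6|L|$ edges are removed and $3|L|+3|R|$ are added. The new triples are unidirectional, since their opposites are not in $\mathcal D'$ (the reversed edges are covered by triples of the other mate).

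\textbf{The criterion of \tref{13}.} For a triple of $\mathcal D$ that was not removed there is nothing new to check, as long as its three witnesses survive. If $\langle x,y,z\rangle\in\mathcal D$ is unidirectional, then its witnesses $\langle z',y,x\rangle$, $\langle z,y',x\rangle$, $\langle z,y,x'\rangle$ are also unidirectional: a Steiner triple on $\{x,y,z'\}$ would cover $(x,y)$ a second time, and similarly for the others. Only Steiner triples were removed, so these witnesses persist in $\mathcal D'$. Directed representatives of surviving Steiner triples are witnessed by their own opposites.

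For a new triple $\langle a_1,a_2,a_3\rangle$ the three witnesses are supplied by $R$:
\begin{enumerate}
\item[(i)] $b=b^{(a,1,2)}$ gives $\langle b_3,a_2,a_1\rangle$, a triple of the form $\langle z',y,x\rangle$;
\item[(ii)] $b=b^{(a,1,3)}$ gives $\langle a_3,b_2,a_1\rangle$, a triple of the form $\langle z,y',x\rangle$;
\item[(iii)] $b=b^{(a,2,3)}$ gives $\langle a_3,a_2,b_1\rangle$, a triple of the form $\langle z,y,x'\rangle$.
\end{enumerate}
For a new triple $\langle b_3,b_2,b_1\rangle$ the same argument runs with the roles of $L$ and $R$ exchanged, using the triples $\langle a_1,a_2,a_3\rangle$ with $a\in L$ as witnesses. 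Then \tref{13} yields that $X(\cdot)$ is a quasigroup, so $\mathcal D'$ is an LDTS.

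The only delicate point is the bookkeeping in the edge-covering step. One has to check that the edges of $R$-triples never leave the set of removed Steiner triples and never collide with each other or with the $L$-triples; this is exactly what the pairwise-determination clause in our definition of a latin bitrade is designed to guarantee.
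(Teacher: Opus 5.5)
Your proof is correct and follows the paper's approach. The paper likewise pairs each $(a_1,a_2,a_3)\in L$ with the triples of $R$ that agree with it in two coordinates, notes that $\langle a_1,a_2,a_3\rangle$ and $\langle b_3,b_2,b_1\rangle$ then cover the two orientations of the shared edge, and invokes \tref{13}, exactly as in your (i)--(iii). You additionally spell out that $\mathcal D'$ is again a DTS and that the surviving triples keep their witnesses, both of which the paper leaves implicit; your collision argument correctly relies on the intended reading of the bitrade definition, namely that two distinct triples of $L$ (or of $R$) never share two points.
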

\begin{proof} Suppose that $(b_1,b_2,b_3)\in R$ is chosen in such a way
that $b_1 = a_1$ and $b_3 = a_3$ where $(a_1,a_2,a_3) \in L$. Then
$\langle a_1,a_2,a_3 \rangle$ covers $(a_1,a_3)$ and $\langle
b_3,b_2,b_1\rangle$ covers $(a_3,a_1)$. By treating cases
$(b_1,b_2) = (a_1,a_2)$ and $(b_2,b_3) = (a_2,a_3)$ in a similar
way we see that \tref{13} can be used.
\end{proof}

If $Q'$ is the quasigroup determined by $\mathcal D'$, and $Q$ is
determined by $\mathcal D$, then we shall say that \emph{$Q'$ is derived
from $Q$ by means of a latin bitrade $(L,R)$.}

By a \emph{surface} we understand here a strongly connected
pseudosurface in which all faces incident to a point rotate
around the point. To turn a strongly connected pseudosurface
into a surface it suffices to divide a point into several
new points (let us call them \emph{vertices}) so that each vertex
corresponds to a cycle of faces around the point. If the
pseudosurface is triangulated, then such a cycle around a point
$x$ takes form $\{y_0,x,y_1\},
\{y_1,x,y_2\},\dots,\{y_k,x,y_0\}$.
A pseudosurface is thus a surface if and only if for each
point $x$ there is only one such cycle.

A DTS quasigroup $Q$
yields components that are pseudosurfaces, and each such pseudosurface
yields a surface by the procedure we have just described. We shall
speak about a \emph{surface constituent} of $Q$.
If a vertex corresponds to the cap of a residual face, it will be
referred to as a \emph{middle} vertex, otherwise it will be referred to
as a \emph{residual} vertex.

\begin{prop} \label{35}
A DTS quasigroup $Q$ can be derived by means of a latin
bitrade from a Steiner quasigroup if and only if each surface constituent
of $Q$ is vertex 3-colourable.
\end{prop}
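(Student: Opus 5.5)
The plan is to pass through an intermediate condition. Call a surface constituent \emph{face $2$-colourable} if its triangular faces can be split into two classes so that faces sharing an edge always lie in different classes. I will prove that $Q$ is derived from a Steiner quasigroup by a latin bitrade if and only if every surface constituent is face $2$-colourable, and that for the oriented triangulated surfaces arising here face $2$-colourability is equivalent to vertex $3$-colourability. Throughout I use the fact, from \tref{13}, that a face $\{x,y,z\}$ with $\langle x,y,z\rangle\in\mathcal D$ meets its neighbours as $\langle z',y,x\rangle$, $\langle z,y',x\rangle$ and $\langle z,y,x'\rangle$, each with the common edge reversed.

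\emph{Derived $\Rightarrow$ $3$-colourable.} Let $Q$ come from a Steiner quasigroup via $(L,R)$ as in \pref{34}. Then the unidirectional triples of $\mathcal D$ are exactly the $\langle a_1,a_2,a_3\rangle$ with $a\in L$ and the $\langle b_3,b_2,b_1\rangle$ with $b\in R$. So the pseudosurface of $Q$ is the bitrade pseudosurface.
\begin{itemize}
\item Give each point-occurrence in a face its coordinate index $i\in\{1,2,3\}$ in $a$ or $b$.
\item If two faces share an edge $\{u,v\}$, the bitrade axiom $(a_i,a_j)=(b_i,b_j)$ says that $u$ and $v$ have the same indices in both faces.
\item Consecutive faces in the cycle around a vertex share an edge through $x$. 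Hence the index of $x$ is constant on that cycle, that is, on the vertex of the surface constituent.
\end{itemize}
This defines a vertex colouring by $\{1,2,3\}$. It is proper because the three vertices of a face carry distinct indices.

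\emph{$3$-colourable $\Rightarrow$ derived.} Fix a proper $3$-colouring of every surface constituent. Put a face oriented $(x,y,z)$ into $L$ if the colours of $x,y,z$ occur in the cyclic order $(1,2,3)$, and into $R$ otherwise. Two faces sharing an edge $\{u,v\}$ induce opposite orientations on it but give $u,v$ the same colours (they share the vertices). Their third vertices then have the same colour as well, so the cyclic colour orders are opposite and the two faces lie in different classes.

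Now order each $L$-face as $a=(x,y,z)$ where $\langle x,y,z\rangle\in\mathcal D$. Order each $R$-face $\langle z',y,x\rangle$ as $b=(x,y,z')$.
\begin{itemize}
\item By the neighbour description from \tref{13}, the $R$-face across each edge of an $L$-face agrees with it in the positions of that edge. This is precisely the latin bitrade condition.
\item Every pair $\{u,v\}$ not lying in a Steiner triple of $\mathcal D$ is an edge of exactly two faces, one in $L$ and one in $R$. Therefore the sets $\{a_1,a_2,a_3\}$, $a\in L$, together with the Steiner triples of $\mathcal D$, cover every pair exactly once. They form an STS whose quasigroup I call $Q_0$.
\item Applying \pref{34} to $Q_0$ and $(L,R)$ replaces these Steiner triples by $\langle a_1,a_2,a_3\rangle$ and $\langle b_3,b_2,b_1\rangle$. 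These are exactly the original unidirectional triples, so the result is $\mathcal D$ and $Q$ is derived from $Q_0$.
\end{itemize}

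The main obstacle is the bookkeeping at vertices rather than points. A point may split into several vertices, and different vertices over the same point may carry different colours. I must check that the class assignment uses only colours of vertices of the face in question, which it does. I must also check that the pair-cover argument uses only that each edge lies in exactly two faces, one from each class, which holds for the pseudosurface regardless of the splitting. In the first direction the corresponding point is that index constancy is only claimed along a single cycle of faces, i.e.\ on a single vertex. With this care the argument goes through componentwise.
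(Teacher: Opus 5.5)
Your proof is correct. The forward direction is the paper's argument: colour each vertex by the coordinate it occupies in the bitrade. You also make explicit that this index is constant along the cycle of faces forming a vertex.

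The converse takes a genuinely different route. The paper orders each bitrade triple by colour ($a_i$ is the vertex of colour $i$), builds one bitrade per surface constituent at the level of vertices, and then re-identifies vertices with points and aggregates. You use the colouring only to split the faces into two alternating classes. You then order each triple by its positions in the directed triple of $\mathcal D$ (reversed for $R$), and \tref{13} supplies the positional agreement across every edge.

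This buys several things:
\begin{itemize}
\item You work with points throughout, so there is no re-identification or aggregation step, and the Steiner system $Q_0$ is written down explicitly.
\item The fact that \pref{34} returns exactly $\mathcal D$ is immediate. With colour ordering, \pref{34} puts the colour-$2$ vertex in the middle of every new directed triple. So the colour labelling on each constituent, and the choice of which face class plays $L$, must tacitly be adjusted to $\mathcal D$. Otherwise one can obtain a different DTS on the same pseudosurface.
\item Your argument shows that face $2$-colourability alone suffices. Hence it is equivalent to vertex $3$-colourability for surface constituents of an LDTS. This is special to the DTS setting: the $7$-vertex triangulation of the torus has a bipartite dual (the Heawood graph) but needs $7$ colours.
\end{itemize}
The paper's version, in exchange, keeps a symmetric geometric dictionary: in both directions the colour classes are the coordinate projections of the bitrade.

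The checks you leave unstated are routine. The bitrade condition for $R$ holds because the face across an edge of an $R$-face is an $L$-face, and that $L$-face's neighbour across the same edge is the given $R$-face. Also $L\cap R=\emptyset$, since $(x,y,z)\in L\cap R$ would put both $\langle x,y,z\rangle$ and $\langle z,y,x\rangle$ into $\mathcal D$, making $\{x,y,z\}$ a Steiner triple.
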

\begin{proof}
In a vertex 3-colourable triangulated surface with a chosen coherent orientation
the faces can be divided
into two classes according to the cyclic ordering of the vertex classes
that is induced by the orientation of the face. The surface is hence
face 2-colourable. For each face colour consider the set of
ordered triples
$(a_1,a_2,a_3)$ such that $\{a_1,a_2,a_3\}$ is a face of the
given colour and $a_i$ is a vertex of colour $i$. It is clear
that the obtained
sets are mates of a latin bitrade.

Assume that all surface constituents of $Q$ are vertex 3-colourable.
Each constituent thus defines a latin bitrade. The identifications
of vertices that are needed to turn the surface constituent into
the corresponding (pseudosurface) component can be carried out
in the bitrade structure without violating the definition
of the latin bitrade. Furthermore,
the obtained latin bitrades can be aggregated
into one bitrade, and this bitrade determines a Steiner
quasigroup from which $Q$ can be derived.

If $Q$ was derived from a Steiner quasigroup, then the used
latin bitrade can be interpreted as a pseudosurface. The
obtained pseudosurface coincides
with the pseudosurface of $Q$. Each constituent of $Q$ can be thus
interpreted as a latin bitrade in which the projections along
the 1st, 2nd and 3rd coordinate yield three sets that are
pairwise disjoint. These sets yield the three colours of vertices.
\end{proof}

Each nonflexible component of a DTS quasigroup $Q$ yields in an obvious
way a \emph{residual component} and a \emph{residual constituent}.
Note that a surface constituent is vertex 3-colourable if and only
if the graph of its residual constituent is bipartite.

It is well known that
triangulated surfaces of genus 0 (the spherical surfaces) are
vertex 3-colourable if and only if they are Eulerian (i.e.~if
each vertex
is of an even degree). Using \tref{33} we see that a flexible
DTS quasigroup can be derived from a Steiner quasigroup by
means of latin bitrades if and only if each component corresponds
to $O_k$ for an even $k=2m$. The trades involved in such derivation
of flexible DTS quasigroups possess a transparent
structure. They are sometimes called \emph{bicyclic} and
can be represented by $L= \{(x_1,y,x_2)$, $(x_2,z,x_3)$,
$\dots$, $(x_{2m-1},y,x_{2m})$, $(x_{2m},z,x_1)\}$ and by $R$
that is
obtained from $L$ by exchanging all occurrences of $y$ and $z$.
Note that by permuting, say, the first and second coordinate we
get a latin bitrade that can be used to build a DTS quasigroup
as well. However, the resulting quasigroup will not be flexible
if $m \ge 3$.

If $m=2$, then the STS of the initial Steiner quasigroup
contains $\{x_1,y,x_2\}$, $\{x_3,y,x_4\}$, $\{x_1,z,x_3\}$
and $\{x_2,z,x_4\}$. This is known as a \emph{Pasch configuration}.
Its transformation via the corresponding latin bitrade is used
in \cite{ldts} several times (e.g.~in Proposition~4.1).

\section{Enumeration and classification}\label{4}
To enumerate DTS quasigroups we use the program Mace4~\cite{mace} which
is part of the package Prover9, an automated theorem prover for
first-order and equational logic. While Prover9 searches for a proof,
Mace4 is generally used to search for finite counterexamples, however
it can also be used to enumerate all structures of some finite order
that satisfy a given set of equations. For example, in order to generate
all proper DTS quasigroups of order 7 we provide Mace4 with the
following input\\
\begin{tabular}{rl}
\scriptsize  1 & \verb:assign(max_models, -1).:\\
\scriptsize  2 & \verb:assign(domain_size, 7).:\\
\scriptsize  3 & \verb:formulas(sos).:\\
\scriptsize  4 & \verb:    x * y = x * z  ->  y = z.:\\
\scriptsize  5 & \verb:    y * x = z * x  ->  y = z.:\\
\scriptsize  6 & \verb:    x * x = x.:\\
\scriptsize  7 & \verb:    (x * (x * y) = y  &  (y * x) * x = y) |:\\
\scriptsize  8 & \verb:        ((x * y) * x = y  &  x * (y * x) = y).:\\
\scriptsize  9 & \verb:    (x * y) * x = y  ->  (x * y) * y = x.:\\
\scriptsize 10 & \verb:    0 * 1 != 1 * 0.:\\
\scriptsize 11 & \verb:end_of_list.:\\
\end{tabular}\\
The equations on lines 7, 8 and~9 correspond to the characterisation of
DTS quasigroups given in \tref{16}. Mace4 tends to generate the results
faster using this characterisation than if the characterisation from
\pref{17} is used. When enumerating proper DTS quasigroups of order~12
it runs approximately 20~times faster. On the right side of the
implication on line~9 either one of the key laws or a conjunction of the
key laws can be used. Similarly the left side of the implication can be
replaced with \,\verb:x * (y * x) = y:\, or with a disjunction of the two
expressions. As one might expect, using the disjunction on the left
gives the worst running time of all. The remaining six possible
combinations all do equally well.

Mace4 can instantly enumerate the DTS quasigroups of orders up to~9 and
determine that none exist for orders 4, 6 or~10. The enumeration of
DTS quasigroups of order~12 can be achieved in a matter of minutes.

The smallest proper DTS quasigroup is of order~7. It is unique up to
isomorphism and yields a single surface constituent which is isomorphic
to~$O_4$.

For proper DTS quasigroups of order~9 there exist three isomorphism
types. The first two types each yield a single surface constituent
isomorphic to~$O_6$, however one of these is flexible while the other
is not, i.e.\ their residual constituents are non-isomorphic. The third
type yields a surface constituent of genus~$1$ consisting of 3 residual
faces.

For proper DTS quasigroups of order~12 there exist two isomorphism
types. Their pseudosurfaces differ only in orientation. Each type
yields three residual surface constituents, all isomorphic to a
tetrahedron.

All DTS quasigroups of order up to 12 are explicitly described in~\cite{ldts}.

At order~13 the combinatorial explosion takes over. If we attempt to
generate the DTS quasigroups of order~13 using the above input, Mace4
soon runs out of memory. In comparison for Steiner triple systems the
combinatorial explosion takes place at order~19~\cite{kaski}.

We split the task of enumerating DTS quasigroups of order~13 into more
manageable tasks by placing restrictions on the degrees of middle
vertices (cf.\ Section~\ref{3}). We first focused on generating the DTS quasigroups with middle
vertices of degree at most~6, then we focused on generating those that
contain at least one middle vertex of degree greater than~6. Thus the
task was split into generating proper DTS quasigroups of order~13 such
that
\begin{enumerate}
\item[1.] all middle vertices have degree~3;

\item[2.] all middle vertices have degree at most~4 and
there exists a middle vertex of degree~4;

\item[3.] all middle vertices have degree at most~6,
there exists a middle vertex of degree~5 and there may or may not
exist a vertex of degree~6;

\item[4.] all middle vertices have degree at most~6 and
there exists a middle vertex of degree~6 but no vertex of degree~5;

\item[5.] there exists a middle vertex of degree~7;

\item[6.] there exists a middle vertex of degree~8;

\item[7.] there exists a middle vertex of degree~9;

\item[8.] there exists a middle vertex of degree~10;

\item[9.] there exists a middle vertex of degree~12.
\end{enumerate}

Mace4 generated a total of 16\,682\,471 quasigroups in 59.2 hours on a
computer equipped with an Intel Xeon E5620 $2.40\,\mathrm{GHz}$ CPU with
$12\,\mathrm{MB}$ of cache. This does not include the time needed to remove
the isomorphic quasigroups. Details are given in Table~\ref{tbl:stats}.

\begin{table}
\begin{tabular}{rrrr}
Task & Generated & Isomorphism types & Time to generate\\
\hline
      1 &           12 &          1 &    2 minutes\\
      2 &     217\,292 &     8\,004 & 24.5 hours\\
      3 &     831\,487 &   106\,446 &  4.0 hours\\
      4 &  1\,337\,912 &    87\,019 & 14.2 hours\\
      5 &  1\,960\,056 &   258\,251 &  2.0 hours\\
    6.1 &  3\,368\,344 &   353\,637 &  3.2 hours\\
    6.2 &  1\,090\,528 &    34\,079 &  2.4 hours\\
6.3 (a) &  1\,327\,664 &    91\,738 &  1.3 hours\\
6.3 (b) &     686\,064 &   299\,641 &  0.6 hours\\
      7 &     325\,644 &    36\,184 &  0.9 hours\\
      8 &  4\,779\,308 &   401\,683 &  3.7 hours\\
      9 &     758\,160 &    63\,180 &  2.2 hours\\
\hline
 Total & 16\,682\,471 &1\,206\,967 & 59.2 hours
\end{tabular}
\caption{The number of proper DTS quasigroups of order~13 generated by Mace4 in each task of the enumeration.}
\label{tbl:stats}
\end{table}

When dealing with the DTS quasigroups that have a middle vertex of
degree~8, Mace4 ran out of memory. The task was split further as
follows. Denote the point corresponding to the middle vertex of degree~8
as~$\mathtt 0$, the corresponding residual face as
$(\mathtt 1,\mathtt 2,\ldots,\mathtt 8)$ and the remaining points as
$\mathtt 9$, $\mathtt T$, $\mathtt E$ and~$\mathtt W$. We split the task
based on how these four remaining points relate to the point~$\mathtt 0$.
There are three possibilities, one of which had to be split further
because Mace4 ran out of memory.
\begin{enumerate}
\item[6.1] The remaining points form two Steiner triangles with the
point~$\mathtt 0$, e.g.\ $\{\mathtt 0,\mathtt 9,\mathtt T\}$ and
$\{\mathtt 0,\mathtt E,\mathtt W\}$;

\item[6.2] there exists another middle vertex corresponding to the
point~$\mathtt 0$ and the remaining four points correspond to vertices which
form a cycle around this middle vertex, e.g.\ the LDTS contains the
directed triples
$\langle\mathtt 9,\mathtt 0,\mathtt T\rangle$,
$\langle\mathtt T,\mathtt 0,\mathtt E\rangle$,
$\langle\mathtt E,\mathtt 0,\mathtt W\rangle$ and
$\langle\mathtt W,\mathtt 0,\mathtt 9\rangle$; or

\item[6.3] there exists a residual vertex corresponding to the point~$\mathtt 0$
and the remaining four points correspond to vertices which form a cycle
around this residual vertex, e.g.\ the LDTS contains the directed triples
$\langle\mathtt 0,\mathtt 9,\mathtt T \rangle$,
$\langle\mathtt T,\mathtt E,\mathtt 0 \rangle$,
$\langle\mathtt 0,\mathtt E,\mathtt W \rangle$ and
$\langle\mathtt W,\mathtt 9,\mathtt 0 \rangle$, and further
  \begin{enumerate}
  \item[(a)] $\mathtt 9 \cdot \mathtt W = \mathtt T$ or

  \item[(b)] $\mathtt 9 \cdot \mathtt W$ is one of the points
  $\mathtt 1,\ldots,\mathtt 8$.
  \end{enumerate}
\end{enumerate}
When dealing with the case of the two Steiner triangles $\{\mathtt 0,\mathtt 9,\mathtt T\}$
and $\{\mathtt 0,\mathtt E,\mathtt W\}$ above, $\mathtt T \cdot\mathtt W$ must be one of the points
$\mathtt 1,\ldots,\mathtt 8$. Assigning $\mathtt T \cdot\mathtt W =\mathtt 1$ reduces the number of
isomorphic models generated and was necessary to prevent Mace4 from
running out of memory. Similarly in 6.3~(b) we assign $\mathtt 9 \cdot\mathtt W =\mathtt 1$.

After putting all the results together we found $1\,206\,969$
isomorphism types of DTS quasigroups of order~13. Out of these $8\,444$
are pure and $924$ are flexible (including the 2 Steiner quasigroups).
There do not exist any pure flexible DTS quasigroups of order~13.

To remove the isomorphic models, the results were first split into
smaller classes according to an invariant which is derived from how
each point of the pseudosurface splits into vertices of the surface,
taking into account the degree of each vertex and whether it is a
middle vertex or a residual vertex. Isomorphic models were then removed
from each class using a custom program which exploits the geometric
structure of DTS quasigroups to find possible isomorphisms. Afterwards,
each of these classes was checked using the GAP~\cite{gap} package
LOOPS~\cite{loops} to confirm that its contents are indeed pairwise
non-isomorphic.

The isomorphism types were then classified according to the genera of
their surface constituents and according to their automorphism group,
see Tables~\ref{tbl:genera} and~\ref{tbl:automorphisms}.
Table~\ref{tbl:genera} also gives the number of non-isomorphic
pseudosurfaces yielded by the DTS quasigroups in each class.
For example the last line in Table~\ref{tbl:genera} indicates that there
exist exactly 6 non-isomorphic DTS quasigroups of order~13 that consist
of 2 surface constituents of genus~1
(see Example~\ref{ldts13g11}).
These 6 quasigroups yield only 2 non-isomorphic pseudosurfaces.
The number of non-isomorphic pseudosurfaces in each class was determined
using \texttt{shortg} from the package \textbf{nauty}~\cite{nauty}. The
automorphism groups in Table~\ref{tbl:automorphisms} were determined
using GAP. We refer to the dihedral group of order~$2n$ as~$D_{2n}$.

\begin{table}
\begin{tabular}{cccccrr}
&&&&&
\multicolumn{1}{c}{Number of}&
\multicolumn{1}{c}{Number of}\\
\multicolumn{5}{c}{Number of surface constituents of genus~$g$}&
\multicolumn{1}{c}{non-isomorphic}&
\multicolumn{1}{c}{non-isomorphic}\\
$\;g=0\;$ & $\;g=1\;$ & $\;g=2\;$ & $\;g=3\;$ & $\;g=4\;$ &
\multicolumn{1}{c}{quasigroups} &
\multicolumn{1}{c}{pseudosurfaces}\\
\hline
0 & 1 & 0 & 0 & 0 &   392\,685 & 189\,280\\
1 & 0 & 0 & 0 & 0 &   391\,805 & 166\,149\\
2 & 0 & 0 & 0 & 0 &   152\,818 &  26\,227\\
0 & 0 & 1 & 0 & 0 &   117\,368 &  58\,588\\
1 & 1 & 0 & 0 & 0 &    80\,875 &  16\,100\\
3 & 0 & 0 & 0 & 0 &    32\,100 &   2\,098\\
1 & 0 & 1 & 0 & 0 &    14\,019 &   3\,162\\
0 & 0 & 0 & 1 & 0 &    10\,636 &   5\,374\\
4 & 0 & 0 & 0 & 0 &     6\,000 &      267\\
2 & 1 & 0 & 0 & 0 &     5\,896 &      505\\
5 & 0 & 0 & 0 & 0 &        955 &       28\\
1 & 0 & 0 & 1 & 0 &        769 &      189\\
3 & 1 & 0 & 0 & 0 &        533 &       36\\
0 & 0 & 0 & 0 & 1 &        246 &      131\\
2 & 0 & 1 & 0 & 0 &        178 &       18\\
4 & 1 & 0 & 0 & 0 &         40 &        3\\
6 & 0 & 0 & 0 & 0 &         24 &        3\\
1 & 0 & 0 & 0 & 1 &         14 &        4\\
0 & 2 & 0 & 0 & 0 &          6 &        2\\
\hline
 &  &  &  & Total &1\,206\,967 & 468\,164\\[0.7ex]
\end{tabular}
\caption{Classification of the isomorphism types of proper
DTS quasigroups of order~13 according to the genera of their surface
constituents.}
\label{tbl:genera}
\end{table}

\begin{table}
\begin{tabular}{crrr}
                & Number   &      & \\
Aut($Q$)        & of types & Pure & Flexible \\
\hline
$C_1$       & 1\,202\,669 & 8\,406 & 864 \\ %[ 1, 1 ]
$C_2$       &      4\,163 &     36 &  43 \\ %[ 2, 1 ]
$C_3$       &          92 &      0 &   8 \\ %[ 3, 1 ]
$C_2 \times C_2$&      17 &      0 &   0 \\ %[ 4, 2 ]
$C_5$       &           8 &      0 &   0 \\ %[ 5, 1 ]
$S_3$       &           7 &      0 &   1 \\ %[ 6, 1 ] the flexible one is STS
$C_6$       &           5 &      0 &   4 \\ %[ 6, 2 ]
$C_{10}$    &           2 &      0 &   2 \\ %[ 10, 2 ]
$D_{10}$    &           1 &      0 &   1 \\ %[ 10, 1 ]
$D_{12}$    &           2 &      0 &   0 \\ %[ 12, 4 ]
$C_{13}$    &           2 &      2 &   0 \\ %[ 13, 1 ]
$C_{13} \rtimes C_3$ &  1 &      0 &   1 \\ %[ 39, 1 ] STS
\hline
Total       & 1\,206\,969 & 8\,444 & 924\\[0.7ex]
\end{tabular}
\caption{Classification of the isomorphism types of DTS quasigroups
of order~13 according to their automorphism group.}
\label{tbl:automorphisms}
\end{table}

Using the sizes of the automorphism groups from
Table~\ref{tbl:automorphisms}, we can easily compute the total number
of DTS quasigroups of order~13 by taking the sum of $13!/\lvert\mathrm{Aut}(Q)\rvert$
over all isomorphism types~$Q$, which comes out to $7\,502\,250\,290\,008\,320$.

If we attempt to generate the DTS quasigroups of orders 15, 19 or~21, Mace4
instantly produces plenty of models and soon runs out of memory. For the
remaining orders, the program tends to produce fewer results. Using the
above input, we were not able to obtain DTS quasigroups of even orders
greater than~18 in a reasonable amount of time, but we did obtain ones of
orders 25, 27, 31 and~37.

To determine the existence spectrum of LDTS in~\cite{ldts} we needed to
obtain LDTS of certain orders, which were as high as~40. We did this by
prescribing a suitable automorphism as part of the input to Mace4.
Generally Mace4 can then produce a model within a few seconds, but
the time varies greatly. To date, the largest model that we have been
able to obtain this way is a pure DTS quasigroup of order~58 with an
automorphism of type~$29^2$. However this technique is not always
successful. For example, we were not able to generate a pure flexible
DTS quasigroup of order~16, instead it was generated using the program
Paradox~\cite{CS} which found an automorphism-free model.

\appendix
\section*{Appendix. Examples of DTS quasigroups of order~13}
\renewcommand\thesection{A}
\setcounter{thm}{0}
It is clearly impossible to list all DTS quasigroups of order~13 but below
are given some which may be of particular interest. These are the unique
proper system with all middle vertices of degree~3, all six systems with
two surface constituents of genus~1, all systems having an automorphism
group of order greater than or equal to~4, and at least one example of a
system having just one surface constituent of genus 0, 1, 2, 3 or~4,
respectively.

In the following examples let
$X=\{\mathtt 0,\mathtt 1,\mathtt 2,\mathtt 3,\mathtt 4,\mathtt 5,\mathtt 6,\mathtt 7,\mathtt 8,\mathtt 9,\mathtt T,\mathtt E,\mathtt W\}$.
For simplicity, we omit commas from the triples.
\begin{ex}\label{ldts13deg3}
Define
$\mathcal T = \{\str{018}$, $\str{09E}$, $\str{0TW}$, $\str{19T}$,
$\str{1EW}$, $\str{259}$, $\str{268}$, $\str{27W}$, $\str{2TE}$,
$\str{35T}$, $\str{36E}$, $\str{379}$, $\str{38W}$, $\str{45W}$,
$\str{46T}$, $\str{47E}$, $\str{489}$, $\str{58E}$, $\str{69W}$,
$\str{78T}\}$,\\
$\mathcal C_1 = \{\dtr{203}$, $\dtr{304}$, $\dtr{402}$, $\dtr{214}$, $\dtr{413}$, $\dtr{312}\}$ and\\
$\mathcal C_2 = \{\dtr{506}$, $\dtr{607}$, $\dtr{705}$, $\dtr{517}$, $\dtr{716}$, $\dtr{615}\}$.\\
Then $\mathcal C_1$ and $\mathcal C_2$ are surface constituents of genus~0, and
$(X,\mathcal T \cup \mathcal C_1 \cup \mathcal C_2)$ is the unique
proper LDTS(13) such that all middle vertices are of degree~3.
The system is automorphism-free and flexible.
\end{ex}

\begin{ex}\label{ldts13g11}
The 6 systems with two surface constituents of genus~1 are defined as follows.
\begin{enumerate}
\item Define $\mathcal T = \{\str{09E}$, $\str{38W}$, $\str{48E}\}$,\\
$\mathcal C_1 = \{\dtr{102}$, $\dtr{203}$, $\dtr{304}$, $\dtr{405}$, $\dtr{501}$, $\dtr{164}$, $\dtr{463}$, $\dtr{365}$,
$\dtr{56W}$, $\dtr{W62}$, $\dtr{261}$, $\dtr{19W}$, $\dtr{W95}$, $\dtr{594}$, $\dtr{491}$, $\dtr{1E5}$, $\dtr{5E3}$, $\dtr{3E2}$,
$\dtr{2EW}$, $\dtr{WE1}\}$ and\\
$\mathcal C_2 = \{\dtr{607}$, $\dtr{70W}$, $\dtr{W0T}$, $\dtr{T08}$, $\dtr{806}$, $\dtr{317}$, $\dtr{718}$, $\dtr{81T}$,
$\dtr{T13}$, $\dtr{24T}$, $\dtr{T4W}$, $\dtr{W47}$, $\dtr{742}$, $\dtr{258}$, $\dtr{857}$, $\dtr{75T}$, $\dtr{T52}$, $\dtr{297}$,
$\dtr{793}$, $\dtr{39T}$, $\dtr{T96}$, $\dtr{698}$, $\dtr{892}$, $\dtr{6ET}$, $\dtr{TE7}$, $\dtr{7E6}\}$.\\
Then $\mathcal C_1$ and $\mathcal C_2$ are surface constituents of genus~1, and
$(X,\mathcal T \cup \mathcal C_1 \cup \mathcal C_2)$,
$(X,\mathcal T \cup \mathcal C_1\op \cup \mathcal C_2)$,
$(X,\mathcal T \cup \mathcal C_1 \cup \mathcal C_2\op)$ and
$(X,\mathcal T \cup \mathcal C_1\op \cup \mathcal C_2\op)$
are non-flexible, automorphism-free LDTS(13)s.

\item Define $\mathcal T = \{\str{09E}$, $\str{137}$, $\str{679}\}$,\\
$\mathcal C_1 = \{\dtr{102}$, $\dtr{203}$, $\dtr{304}$, $\dtr{405}$, $\dtr{501}$, $\dtr{16W}$, $\dtr{W64}$, $\dtr{463}$,
$\dtr{365}$, $\dtr{562}$, $\dtr{261}$, $\dtr{295}$, $\dtr{594}$, $\dtr{49W}$, $\dtr{W92}$, $\dtr{1E5}$, $\dtr{5E3}$, $\dtr{3E2}$, $\dtr{2EW}$,
$\dtr{WE1}\}$ and\\
$\mathcal C_2 = \{\dtr{60T}$, $\dtr{T07}$, $\dtr{70W}$, $\dtr{W08}$, $\dtr{806}$, $\dtr{418}$, $\dtr{819}$, $\dtr{91T}$,
$\dtr{T14}$, $\dtr{427}$, $\dtr{72T}$, $\dtr{T28}$, $\dtr{824}$, $\dtr{83W}$, $\dtr{W3T}$, $\dtr{T39}$, $\dtr{938}$, $\dtr{758}$, $\dtr{85T}$,
$\dtr{T5W}$, $\dtr{W57}$, $\dtr{4ET}$, $\dtr{TE6}$, $\dtr{6E8}$, $\dtr{8E7}$, $\dtr{7E4}\}$.\\
Then $\mathcal C_1$ and $\mathcal C_2$ are surface constituents of genus~1, and
$(X,\mathcal T \cup \mathcal C_1 \cup \mathcal C_2)$ and
$(X,\mathcal T \cup \mathcal C_1 \cup \mathcal C_2\op)$
are non-flexible, automorphism-free LDTS(13)s.
\end{enumerate}
\end{ex}

\begin{ex}\label{cyclicsts13}
The DTS quasigroup that has automorphism group of order~39 is the
Steiner quasigroup which comes from the cyclic STS(13) obtained
from the starter blocks $\str{014}$, $\str{027}$ under the action
of the permutation\\ $(\mathtt 0,\mathtt 1,\mathtt 2,\mathtt
3,\mathtt 4,\mathtt 5,\mathtt 6,\mathtt 7,\mathtt 8,\mathtt
9,\mathtt T,\mathtt E,\mathtt W)$.
\end{ex}

\begin{ex}\label{ldts13C13}
The 2 DTS quasigroups that have automorphism group $C_{13}$ are defined by the
triples obtained from the following starter blocks under the action of the
permutation
$(\mathtt 0,\mathtt 1,\mathtt 2,\mathtt 3,\mathtt 4,\mathtt 5,\mathtt 6,\mathtt 7,\mathtt 8,\mathtt 9,\mathtt T,\mathtt E,\mathtt W)$.
The starter blocks for $\mathcal C$ are $\dtr{105}$, $\dtr{507}$, $\dtr{703}$, $\dtr{301}$.
Then $\mathcal C$ is a surface constituent of genus~1, and $(X,\mathcal C)$ and $(X,\mathcal C\op)$ are pure, non-flexible LDTS(13)s.
\end{ex}

\begin{ex}\label{ldts13D12}
The 2 DTS quasigroups that have automorphism group $D_{12}$ of order~12 are defined by the
triples obtained from the following starter blocks under the action of the
group generated by the permutations
$(\mathtt 0,\mathtt 1,\mathtt 2,\mathtt 3,\mathtt 4,\mathtt 5)(\mathtt 6,\mathtt 7,\mathtt 8,\mathtt 9,\mathtt T,\mathtt E)$ and
$(\mathtt 0,\mathtt 5)(\mathtt 1,\mathtt 4)(\mathtt 2,\mathtt 3)(\mathtt 6,\mathtt 8)(\mathtt 9,\mathtt E)$.
The starter blocks for $\mathcal T$ are $\str{018}$, $\str{024}$, $\str{03W}$, $\str{69W}$, and
for $\mathcal C$ are $\dtr{60E}$, $\dtr{E09}$.
Then $\mathcal C$ is a surface constituent of genus~1, and
$(X, \mathcal T \cup \mathcal C)$ and $(X, \mathcal T \cup \mathcal C\op)$ are non-flexible LDTS(13)s.
\end{ex}

\begin{ex}\label{ldts13D10}
The unique DTS quasigroup that has automorphism group $D_{10}$ of order~10
is defined by the triples obtained from the following starter blocks under the
action of the group generated by the permutations
$(\mathtt 0,\mathtt 1,\mathtt 2,\mathtt 3,\mathtt 4)(\mathtt 5,\mathtt 6,\mathtt 7,\mathtt 8,\mathtt 9)$ and
$(\mathtt 1,\mathtt 4)(\mathtt 2,\mathtt 3)(\mathtt 6,\mathtt 9)(\mathtt 7,\mathtt 8)(\mathtt T,\mathtt E)$.
The starter block
for $\mathcal C_1$ is $\dtr{0T4}$,
for $\mathcal C_2$ is $\dtr{5T8}$, and
for $\mathcal T$ are $\str{026}$, $\str{05W}$, $\str{078}$, $\str{TEW}$.
Then $\mathcal C_1$ and $\mathcal C_2$ are surface constituents of genus~0, and
$(X, \mathcal T \cup \mathcal C_1 \cup \mathcal C_2)$ is a flexible LDTS(13).
\end{ex}

\begin{ex}\label{ldts13C10}
The 2 DTS quasigroups that have automorphism group $C_{10}$ can both be
obtained from the starter blocks $\str{017}$, $\str{05W}$, $\str{TEW}$,
$\dtr{0T2}$, $\dtr{2E0}$.
The first LDTS is defined by the triples obtained from the starter blocks
under the action of the permutation
$(\mathtt 0,\mathtt 1,\mathtt 2,\mathtt 3,\mathtt 4,\mathtt 5,\mathtt 6,\mathtt 7,\mathtt 8,\mathtt 9)(\mathtt T,\mathtt E)$.
The second LDTS is defined by the triples obtained from the starter blocks
under the action of the permutation
$(\mathtt 0,\mathtt 1,\mathtt 2,\mathtt 3,\mathtt 4,\mathtt 5,\mathtt 6,\mathtt 7,\mathtt 8,\mathtt 9)$.
Both LDTS(13)s are flexible and each consists of 2 surface constituents of genus~0.
\end{ex}

\begin{ex}\label{ldts13C6}
The 5 DTS quasigroups that have automorphism group $C_6$ are defined by the
triples obtained from the following starter blocks under the action of the
permutation
$(\mathtt 0,\mathtt 1,\mathtt 2,\mathtt 3,\mathtt 4,\mathtt 5)(\mathtt 6,\mathtt 7,\mathtt 8,\mathtt 9,\mathtt T,\mathtt E)$.
\begin{enumerate}
  \item The starter blocks for $\mathcal D$ are $\str{06W}$, $\str{68T}$, $\dtr{104}$, $\dtr{407}$, $\dtr{708}$, $\dtr{80E}$, $\dtr{E0T}$, $\dtr{T01}$. Then $(X,\mathcal D)$ and $(X,\mathcal D\op)$ are flexible LDTS(13)s, each consisting of 3 surface constituents of genus~0.

  \item The starter blocks for $\mathcal D$ are $\str{06W}$, $\str{09T}$, $\str{68T}$, $\dtr{104}$, $\dtr{40E}$, $\dtr{E08}$, $\dtr{801}$. Then $(X,\mathcal D)$ and $(X,\mathcal D\op)$ are flexible LDTS(13)s, each consisting of 3 surface constituents of genus~0.

  \item The starter blocks for $\mathcal D$ are $\str{03W}$, $\str{68T}$, $\str{69W}$, $\dtr{106}$, $\dtr{607}$, $\dtr{702}$, $\dtr{20T}$, $\dtr{T09}$, $\dtr{901}$. Then $(X,\mathcal D)$ is a non-flexible LDTS(13) consisting of a single surface constituent of genus~1.
\end{enumerate}
\end{ex}

\begin{ex}\label{ldts13S3}
The 7 DTS quasigroups that have automorphism group $S_3$ are defined by the
triples obtained from the following starter blocks under the action of the
group generated by the permutations
$(\mathtt 0,\mathtt 1,\mathtt 2)(\mathtt 3,\mathtt 4,\mathtt 5)(\mathtt 6,\mathtt 7,\mathtt 8)(\mathtt 9,\mathtt T,\mathtt E)$ and
$(\mathtt 0,\mathtt 3)(\mathtt 1,\mathtt 5)(\mathtt 2,\mathtt 4)(\mathtt 7,\mathtt 8)(\mathtt T,\mathtt E)$.
\begin{enumerate}
\item The starter blocks for $\mathcal T$ are $\str{678}$, $\str{69W}$, $\str{9TE}$,
for $\mathcal C_1$ are $\dtr{061}$, $\dtr{163}$, $\dtr{0W2}$, and
for $\mathcal C_2$ are $\dtr{094}$, $\dtr{491}$, $\dtr{198}$, $\dtr{893}$.
Then $\mathcal C_1$ is a surface constituent of genus~0,
$\mathcal C_2$ is a surface constituent of genus~1, and
$(X,\mathcal T \cup \mathcal C_1 \cup \mathcal C_2)$ and $(X,\mathcal T \cup \mathcal C_1\op \cup \mathcal C_2)$ are non-flexible LDTS(13)s.

\item The starter blocks for $\mathcal T$ are $\str{05W}$, $\str{678}$, $\str{69W}$, $\str{9TE}$, and
for $\mathcal C$ are $\dtr{061}$, $\dtr{164}$, $\dtr{46E}$, $\dtr{E60}$, $\dtr{092}$, $\dtr{293}$.
Then $\mathcal C$ is a surface constituent of genus~1, and
$(X,\mathcal T \cup \mathcal C)$ and $(X,\mathcal T \cup \mathcal C\op)$ are non-flexible LDTS(13)s.

\item The starter blocks for $\mathcal T$ are $\str{016}$, $\str{05W}$, $\str{678}$, $\str{69W}$, $\str{9TE}$, and
for $\mathcal C$ are $\dtr{094}$, $\dtr{491}$, $\dtr{198}$, $\dtr{893}$.
Then $\mathcal C$ is a surface constituent of genus~1, and
$(X,\mathcal T \cup \mathcal C)$ is a non-flexible LDTS(13).

\item The starter blocks for $\mathcal T$ are $\str{039}$, $\str{04E}$, $\str{057}$, $\str{678}$, $\str{69W}$, $\str{9TE}$, and
for $\mathcal C$ are $\dtr{061}$, $\dtr{16E}$, $\dtr{E63}$, $\dtr{0W2}$.
Then $\mathcal C$ is a surface constituent of genus~0, and
$(X,\mathcal T \cup \mathcal C)$ is a non-flexible LDTS(13).

\item The starter blocks for $\mathcal T$ are $\str{017}$, $\str{039}$, $\str{04E}$, $\str{05W}$, $\str{08T}$, $\str{678}$, $\str{69W}$, $\str{9TE}$.
Then $(X,\mathcal T)$ is the non-cyclic STS(13).
\end{enumerate}
\end{ex}

\begin{ex}\label{ldts13C5}
The 8 DTS quasigroups that have automorphism group $C_5$ are defined by the
triples obtained from the following starter blocks under the action of the
permutation
$(\mathtt 0,\mathtt 1,\mathtt 2,\mathtt 3,\mathtt 4)(\mathtt 5,\mathtt 6,\mathtt 7,\mathtt 8,\mathtt 9)$.
The starter blocks for $\mathcal C_0$ are $\dtr{0T1}$, $\dtr{1E0}$,
for $\mathcal C_1$ are $\dtr{706}$, $\dtr{609}$, $\dtr{90W}$, $\dtr{W07}$, $\dtr{5T7}$, $\dtr{5E6}$,
for $\mathcal C_2$ are $\dtr{706}$, $\dtr{608}$, $\dtr{803}$, $\dtr{307}$, $\dtr{0W5}$, $\dtr{5W1}$, $\dtr{5T6}$, $\dtr{5E8}$,
for $\mathcal T_1$ are $\str{025}$, $\str{TEW}$, and
$\mathcal T_2 = \{\str{TEW}\}$.
Then $\mathcal C_0$ is a surface constituent of genus~0, $\mathcal C_1$ and
$\mathcal C_2$ are surface constituents of genus~2, and
$(X,\mathcal T_1 \cup \mathcal C_0 \cup \mathcal C_1)$,
$(X,\mathcal T_1 \cup \mathcal C_0 \cup \mathcal C_1\op)$,
$(X,\mathcal T_1 \cup \mathcal C_0\op \cup \mathcal C_1)$,
$(X,\mathcal T_1 \cup \mathcal C_0\op \cup \mathcal C_1\op)$,
$(X,\mathcal T_2 \cup \mathcal C_0 \cup \mathcal C_2)$,
$(X,\mathcal T_2 \cup \mathcal C_0 \cup \mathcal C_2\op)$,
$(X,\mathcal T_2 \cup \mathcal C_0\op \cup \mathcal C_2)$ and
$(X,\mathcal T_2 \cup \mathcal C_0\op \cup \mathcal C_2\op)$
are non-flexible LDTS(13)s.
\end{ex}

\begin{ex}\label{ldts13C22}
The 17 DTS quasigroups that have automorphism group $C_2\times C_2$ are defined
by the triples obtained from the following starter blocks under the action of
the group generated by the permutations
$(\mathtt 0,\mathtt 1)(\mathtt 2,\mathtt 3)(\mathtt 4,\mathtt 5)(\mathtt 6,\mathtt 7)(\mathtt 8,\mathtt 9)$ and
$(\mathtt 0,\mathtt 9)(\mathtt 1,\mathtt 8)(\mathtt 2,\mathtt 7)(\mathtt 3,\mathtt 6)(\mathtt 4,\mathtt 5)(\mathtt T,\mathtt E)$.
\begin{enumerate}
\item
The starter blocks for $\mathcal T$ are $\str{048}$, $\str{45W}$, $\str{TEW}$,
for $\mathcal D_0$ are $\dtr{0T5}$, $\dtr{5T8}$, $\dtr{8T6}$, $\dtr{6T2}$, $\dtr{2T0}$,
for $\mathcal C_1$ are $\dtr{03W}$, $\dtr{W38}$, $\dtr{839}$, $\dtr{930}$, and
for $\mathcal C_2$ are $\dtr{243}$, $\dtr{346}$.
Then $\mathcal C_1$ and $\mathcal C_2$ are surface constituents of genus~0, $\mathcal D_0$ consists of 2 surface constituents of genus~0, and
$(X,\mathcal T \cup \mathcal D_0 \cup \mathcal C_1 \cup \mathcal C_2)$,
$(X,\mathcal T \cup \mathcal D_0 \cup \mathcal C_1\op \cup \mathcal C_2)$,
$(X,\mathcal T \cup \mathcal D_0 \cup \mathcal C_1 \cup \mathcal C_2\op)$ and
$(X,\mathcal T \cup \mathcal D_0 \cup \mathcal C_1\op \cup \mathcal C_2\op)$
are non-flexible LDTS(13)s.

\item
The starter blocks for $\mathcal T$ are $\str{01W}$, $\str{048}$, $\str{26W}$, $\str{45W}$, $\str{TEW}$,
for $\mathcal C_1$ are $\dtr{305}$, $\dtr{507}$, $\dtr{706}$, $\dtr{603}$, and
for $\mathcal C_2$ are $\dtr{0T2}$, $\dtr{2T5}$, $\dtr{5T6}$, $\dtr{6T8}$, $\dtr{8T1}$.
Then $\mathcal C_1$ and $\mathcal C_2$ are surface constituents of genus~0, and
$(X,\mathcal T \cup \mathcal C_1 \cup \mathcal C_2)$ and
$(X,\mathcal T \cup \mathcal C_1\op \cup \mathcal C_2)$
are non-flexible LDTS(13)s.

\item
The starter blocks for $\mathcal T$ are $\str{01W}$, $\str{048}$, $\str{27W}$, $\str{45W}$, $\str{TEW}$, and
for $\mathcal C$ are $\dtr{305}$, $\dtr{506}$, $\dtr{607}$, $\dtr{703}$, $\dtr{0T2}$, $\dtr{2T5}$, $\dtr{5T7}$, $\dtr{7T9}$, $\dtr{9T0}$.
Then $\mathcal C$ is a surface constituent of genus~0, and
$(X,\mathcal T \cup \mathcal C)$ and
$(X,\mathcal T \cup \mathcal C\op)$
are non-flexible LDTS(13)s.

\item
The starter blocks for $\mathcal T$ are $\str{01E}$, $\str{048}$, $\str{07T}$, $\str{09W}$, $\str{23W}$, $\str{45W}$, $\str{TEW}$, and
for $\mathcal C$ are $\dtr{206}$, $\dtr{603}$, $\dtr{305}$, $\dtr{502}$, $\dtr{2T5}$, $\dtr{5T3}$.
Then $\mathcal C$ is a surface constituent of genus~0, and
$(X,\mathcal T \cup \mathcal C)$ and
$(X,\mathcal T \cup \mathcal C\op)$
are non-flexible LDTS(13)s.

\item
The starter blocks for $\mathcal T$ are $\str{01T}$, $\str{02E}$, $\str{048}$, $\str{09W}$, $\str{25T}$, $\str{26W}$, $\str{45W}$, $\str{TEW}$, and
for $\mathcal C$ are $\dtr{305}$, $\dtr{507}$, $\dtr{706}$, $\dtr{603}$.
Then $\mathcal C$ is a surface constituent of genus~0, and
$(X,\mathcal T \cup \mathcal C)$ and
$(X,\mathcal T \cup \mathcal C\op)$
are non-flexible LDTS(13)s.

\item
The starter blocks for $\mathcal T$ are $\str{01W}$, $\str{048}$, $\str{23W}$, $\str{45W}$, $\str{TEW}$, and
for $\mathcal C$ are $\dtr{206}$, $\dtr{603}$, $\dtr{305}$, $\dtr{502}$, $\dtr{0T9}$, $\dtr{9T2}$, $\dtr{2T5}$, $\dtr{5T7}$, $\dtr{7T0}$.
Then $\mathcal C$ is a surface constituent of genus~1, and
$(X,\mathcal T \cup \mathcal C)$ and
$(X,\mathcal T \cup \mathcal C\op)$
are non-flexible LDTS(13)s.

\item
The starter blocks for $\mathcal T$ are $\str{01T}$, $\str{048}$, $\str{09W}$, $\str{25T}$, $\str{27W}$, $\str{45W}$, $\str{TEW}$, and
for $\mathcal C$ are $\dtr{20E}$, $\dtr{E03}$, $\dtr{305}$, $\dtr{507}$, $\dtr{706}$, $\dtr{602}$.
Then $\mathcal C$ is a surface constituent of genus~1, and
$(X,\mathcal T \cup \mathcal C)$
is a non-flexible LDTS(13).

\item
The starter blocks for $\mathcal T$ are $\str{01E}$, $\str{048}$, $\str{09W}$, $\str{26W}$, $\str{45W}$, $\str{TEW}$, and
for $\mathcal C$ are $\dtr{203}$, $\dtr{305}$, $\dtr{506}$, $\dtr{60T}$, $\dtr{T07}$, $\dtr{702}$, $\dtr{2T5}$, $\dtr{5T3}$.
Then $\mathcal C$ is a surface constituent of genus~2, and
$(X,\mathcal T \cup \mathcal C)$ and
$(X,\mathcal T \cup \mathcal C\op)$
are non-flexible LDTS(13)s.
\end{enumerate}
\end{ex}

\begin{ex}\label{ldts13g3}
The system is defined by the triples obtained from the following starter blocks under the action of the permutation
$(\mathtt 0,\mathtt 1,\mathtt 2)(\mathtt 3,\mathtt 4,\mathtt 5)(\mathtt 6,\mathtt 7,\mathtt 8)(\mathtt 9,\mathtt T,\mathtt E)$.
The starter blocks for $\mathcal C$ are $\dtr{16E}$, $\dtr{E63}$, $\dtr{368}$, $\dtr{862}$, $\dtr{265}$, $\dtr{561}$, $\dtr{096}$, $\dtr{69W}$, $\dtr{W98}$, $\dtr{893}$, $\dtr{391}$, $\dtr{195}$, $\dtr{59T}$, $\dtr{T90}$, $\dtr{3W0}$, $\dtr{0W5}$, and
$\mathcal T = \{\str{012}$, $\str{345}\}$.
Then $\mathcal C$ is a surface constituent of genus~3, and
$(X, \mathcal T \cup \mathcal C)$ is a non-flexible LDTS(13).
The automorphism group of the DTS quasigroup is $C_3$.
\end{ex}

\begin{ex}\label{ldts13g4}
The system is defined by the triples obtained from the following starter blocks under the action of the permutation
$(\mathtt 0,\mathtt 1)(\mathtt 2,\mathtt 3)(\mathtt 4,\mathtt 5)(\mathtt 6,\mathtt 7)(\mathtt 8,\mathtt 9)(\mathtt T,\mathtt E)$.
The starter blocks for $\mathcal{C}$ are $\dtr{065}$, $\dtr{162}$, $\dtr{260}$, $\dtr{561}$, $\dtr{084}$, $\dtr{287}$, $\dtr{382}$, $\dtr{483}$, $\dtr{580}$, $\dtr{785}$, $\dtr{0T9}$, $\dtr{1T0}$, $\dtr{2T1}$, $\dtr{3T4}$, $\dtr{4T5}$, $\dtr{5T3}$, $\dtr{6T7}$, $\dtr{7T2}$, $\dtr{8T6}$, $\dtr{9T8}$, $\dtr{0W2}$, $\dtr{2W4}$, $\dtr{4W6}$, $\dtr{6W8}$, $\dtr{8W1}$, and
$\mathcal T = \{\str{TEW}\}$.
Then $\mathcal C$ is a surface constituent of genus~4, and
$(X, \mathcal T \cup \mathcal C)$ is a non-flexible LDTS(13).
The automorphism group of the DTS quasigroup is $C_2$.
\end{ex}

\end{document}